\documentclass[11pt]{article}

\usepackage[utf8]{inputenc}
\usepackage{etex}
\usepackage[all]{xy}
\usepackage{cite}
\usepackage{amsfonts}
\usepackage{amsthm}
\usepackage{enumerate}
\usepackage{graphicx}
\usepackage{mathrsfs}
\usepackage{bm}
\usepackage{amssymb,amsmath} 
\usepackage{makecell}
\usepackage{tikz}
\usepackage{pgf}
\usepackage{tikz}
\usetikzlibrary{patterns}
\usepackage{pgffor}
\usepackage{pgfcalendar}
\usepackage{pgfpages}
\usepackage{shuffle,yfonts}
\usepackage{mathtools}
 \allowdisplaybreaks
\DeclareFontFamily{U}{shuffle}{}
\DeclareFontShape{U}{shuffle}{m}{n}{ <-8>shuffle7 <8->shuffle10}{}

\newcommand{\barz}{{\bar\zeta}}

\newcommand{\A}{{\rm A}}

\newcommand{\bfk}{{\boldsymbol{\sl{k}}}}

\newcommand{\bfx}{{\boldsymbol{\sl{x}}}}
\newcommand{\bfz}{{\boldsymbol{\sl{z}}}}

 \allowdisplaybreaks
\usetikzlibrary{arrows,shapes,chains}
 \allowdisplaybreaks

\catcode`!=11
\let\!int\int \def\int{\displaystyle\!int}
\let\!lim\lim \def\lim{\displaystyle\!lim}
\let\!sum\sum \def\sum{\displaystyle\!sum}
\let\!sup\sup \def\sup{\displaystyle\!sup}
\let\!inf\inf \def\inf{\displaystyle\!inf}
\let\!cap\cap \def\cap{\displaystyle\!cap}
\let\!max\max \def\max{\displaystyle\!max}
\let\!min\min \def\min{\displaystyle\!min}
\let\!frac\frac \def\frac{\displaystyle\!frac}
\catcode`!=12

\let\oldsection\section
\renewcommand\section{\setcounter{equation}{0}\oldsection}

\allowdisplaybreaks

\DeclareMathOperator*{\dep}{dep}
\DeclareMathOperator{\Li}{Li}

\def\N{\mathbb{N}}

\theoremstyle{plain}
\newtheorem{thm}{Theorem}[section]

\newtheorem{cor}[thm]{Corollary}

\theoremstyle{definition}
\newtheorem{qu}{Question}[section]
\newtheorem{defn}{Definition}[section]

\begin{document}
\title{\bf A Unified Approach to Formal Arakawa-Kaneko Type Zeta Values via Iterated Integrals}
\author{
{Ende Pan${}^{a,}$\thanks{Email: 32082@qzc.edu.cn},\quad Ce Xu${}^{b,}$\thanks{Email: cexu2020@ahnu.edu.cn} \quad and\quad Jianqiang Zhao${}^{c,}$\thanks{Email: zhaoj@ihes.fr}}\\[1mm]
\small  a. College of Teacher Education, Quzhou University, \\ \small Quzhou 324022, P.R. China\\
\small  b. School of Mathematics and Statistics, Anhui Normal University, \\ \small Wuhu 241002, P.R. China\\
\small  c. Department of Mathematics, The Bishop's School, La Jolla, CA 92037, USA.
}

\date{}
\maketitle

\noindent{\bf Abstract.}
We introduce a formal multiple polylogarithm function (FMPF) and general formal Arakawa-Kaneko zeta values (GFAKZVs), and establish several useful properties analogous to those of the ordinary Arakawa-Kaneko zeta values by employing the method of iterated integrals, a powerful tool originating in algebraic topology. In particular, we prove various duality and sum formulas involving FMPFs and GFAKZVs. This allows us to derive, in a uniform manner, a number of known results concerning Arakawa-Kaneko zeta values and Kaneko-Tsumura $\psi$-values that were previously obtained by different approaches, thereby highlighting the unifying power of the iterated integral formalism.

\noindent{\bf Keywords}: Iterated integrals; (formal) Arakawa-Kaneko zeta values; Kaneko-Tsumura $\psi$-values; shuffle relation; (formal) multiple polylogarithm function.

\noindent{\bf MSC code (Mathematics Subject Classification):} 11M32, 11M99.

\section{Introduction}
Many objects in mathematics have been studied through their associated zeta functions, the most famous of which is undoubtedly the Riemann zeta function, whose study dates back to Euler and has been central to number theory for centuries. Over the past few decades, numerous generalizations have been discovered and extensively investigated. These include, in particular, multiple zeta functions-which generalize the Riemann zeta to higher dimensions and were independently introduced by Hoffman and Zagier in the early 1990s-and the Arakawa-Kaneko zeta function, which arose from the study of poly-Bernoulli numbers and their connections to multiple zeta values. The study of the special values of these functions, often expressed as infinite series, has proved particularly fruitful and is deeply connected to various other branches of mathematics and even theoretical physics (see, e.g., \cite{Brown2012,Broadhurst1996}). To explore the structures of such infinite series, researchers have employed a variety of methods, including algebraic geometry~\cite{Brown2012,Deligne2010}, complex analysis and special functions~\cite{XuZhao2024b,XuZhao2022May}, differential equations~\cite{Li2008,Zagier2012}, and others. In this paper, we utilise the theory of iterated integrals, first developed by Chen in the context of path spaces and fundamental groups~\cite{KTChen1971,KTChen1977}, to give a unified treatment of various results related to Arakawa-Kaneko zeta values.

We begin with some basic notation. Let $\N$ denote the set of positive integers.
A finite sequence $\bfk:=(k_1,\ldots, k_r)\in\N^r$ is called a \emph{composition}. We put
\begin{equation*}
 |\bfk|:=k_1+\cdots+k_r,\quad \dep(\bfk):=r,
\end{equation*}
and call them the \emph{weight} and the \emph{depth} of $\bfk$, respectively. If $k_r>1$, then $\bfk$ is called \emph{admissible}. As a convention, we write $\{m\}_r$ for the sequence of $m$ repeated $r$ times.

\begin{defn}\label{defn-fmplf}
Let $z_j\equiv z_{j}(t)\ (j=1,2,\ldots,r)$ be meromorphic functions of $t$ with at most logarithmic singularities. For any $\bfz:=(z_1,\ldots,z_r)$ and composition $\bfk:=(k_1,\ldots,k_r)$, we define the \emph{formal multiple polylogarithm function} (FMPF) by
\begin{align}\label{equ-defn-fmplf}
&L_{z_0}\left(\!\! {\begin{array}{*{20}{c}}
   {\bfz}  \\
   {\bfk}  \\
\end{array}} ;x\!\right)\equiv L_{z_0}\left(\!\! {\begin{array}{*{20}{c}}
   {{z_1},{z_2}, \ldots ,{z_r}}  \\
   {k_1,k_2,\ldots,k_r}  \\
\end{array}} ;x\!\right)\nonumber\\&:=\int_0^x z_{1}(t)dt \big(z_{0}(t)dt \big)^{k_1-1}\cdots z_{r}(t)dt \big(z_{0}(t)dt \big)^{k_r-1}\nonumber\\& =\int_0^x z_{1}dt \big(z_{0}dt \big)^{k_1-1}\cdots z_{r}dt \big(z_{0}dt \big)^{k_r-1},
\end{align}
where
\[\int_{0}^z f_1(t)dt\, f_2(t)dt\cdots f_k(t)dt:=\int\limits_{0<t_1<\cdots<t_k<z}f_1(t_1)f_2(t_2)\cdots f_k(t_k)\, dt_1dt_2\cdots dt_k
\]
is the iterated integral first studied by K.T. Chen in the 1960s~\cite{KTChen1971,KTChen1977}.
\end{defn}

Clearly, we have
\begin{align}\label{diff-RF}
\frac{d}{dx} L_{z_0}\left(\!\! {\begin{array}{*{20}{c}}
   {{z_1},{z_2}, \ldots ,{z_r}}  \\
   {k_1,k_2,\ldots,k_r}  \\
\end{array}} ;x\!\right)= \left\{ {\begin{array}{*{20}{c}}  L_{z_0}\left(\!\! {\begin{array}{*{20}{c}}
   {{z_1}, \ldots ,z_{r-1},{z_r}}  \\
   {k_1,\ldots,k_{r-1},k_r-1}  \\
\end{array}} ;x\!\right)z_0(x)
   {,\ \ k_r>1,}  \\
   { L_{z_0}\left(\!\! {\begin{array}{*{20}{c}}
   {{z_1}, \ldots ,{z_{r-1}}}  \\
   {k_1,\ldots,k_{r-1}}  \\
\end{array}} ;x\!\right)z_r(x),\;\;\;k_r= 1.}  \\
\end{array} } \right.
\end{align}

For example, if we take $z_{j}(t)=\frac{1}{1-t}$ for all $j=1,\ldots,r$ and $z_0(t)=\frac1{t}$ in \eqref{defn-fmplf}, then $L_{z_0}$ reduces to the classical \emph{multiple polylogarithm function}
\begin{align*}
 L_{z_0}\left(\!\! {\begin{array}{*{20}{c}}
   {{z_1},{z_2}, \ldots ,{z_r}}  \\
   {k_1,k_2,\ldots,k_r}  \\
\end{array}} ;x\!\right)&=\Li_{k_1,\ldots,k_r}(x)
:= \sum\limits_{1 \le {n_1} <  \cdots  < {n_r}} {\frac{{{x^{{n_r}}}}}{{n_1^{{k_1}}n_2^{{k_2}} \cdots n_r^{{k_r}}}}},\quad x \in [- 1,1).
\end{align*}
More generally, for any $\bfk=(k_1,\dotsc,k_r)\in\N^r$ and $\bfx=(x_1,\ldots,x_r)\in \mathbb{C}^r$, the classical $r$-variable \emph{multiple polylogarithm} is defined by
\begin{align*}
\Li_{\bfk}(\bfx)\equiv \Li_{k_1,\dotsc,k_r}(x_1,\dotsc,x_r):=\sum_{0<n_1<\cdots<n_r} \frac{x_1^{n_1}\dotsm x_r^{n_r}}{n_1^{k_1}\dotsm n_r^{k_r}},
\end{align*}
which converges if $|x_j\cdots x_r|<1$ for all $j=1,\dotsc,r$. It can be analytically continued to a multi-valued meromorphic function on $\mathbb{C}^r$ (see \cite{Zhao2007d}).
Clearly, if $k_r>1$, then $\Li_{k_1,\ldots,k_r}(1)=\zeta(k_1,\ldots,k_r)$. The classical \emph{multiple zeta values} (MZVs) are defined by (see~\cite{H1992,DZ1994})
\begin{align*}
\zeta(k_1,\dotsc,k_r):=\sum\limits_{0<m_1<\dotsb<m_r } \frac{1}{m_1^{k_1}\dotsm m_r^{k_r}},
\end{align*}
for positive integers $k_1,\dotsc,k_r$ with $k_r>1$. The systematic study of MZVs began in the early 1990s with the works of Hoffman~\cite{H1992} and Zagier~\cite{DZ1994}. Owing to their surprising and sometimes mysterious appearances in many areas of mathematics and theoretical physics, these special values have attracted considerable attention over the past three decades (see, for example, the book~\cite{Zhao2016}).

If we set $z_{j}(t)=\frac{2}{1-t^2}$ for all $j=1,\ldots,r$ and $z_0(t)=\frac1{t}$ in \eqref{defn-fmplf}, then we obtain the \emph{Kaneko-Tsumura multiple $\A$-function}~\cite{KanekoTs2018b,KT2018,KanekoTs2019}
\begin{align*}
L_{z_0}\left(\!\! {\begin{array}{*{20}{c}}
   {{z_1},{z_2}, \ldots ,{z_r}}  \\
   {k_1,k_2,\ldots,k_r}  \\
\end{array}} ;x\!\right)=\A(k_1,\ldots,k_r;x)
:= 2^r\sum\limits_{1 \le {n_1} <  \cdots  < {n_r}\atop n_i\equiv i\ {\rm mod}\ 2} {\frac{{{x^{{n_r}}}}}{{n_1^{{k_1}}n_2^{{k_2}} \cdots n_r^{{k_r}}}}},\quad x\in \left[ { - 1,1} \!\right).
\end{align*}
In particular, Kaneko and Tsumura~\cite{KanekoTs2018b,KanekoTs2019} introduced and studied a new kind of level-two multiple zeta values, called \emph{multiple $T$-values} (MTVs), defined by
\begin{align*}
T(k_1,k_2,\ldots,k_r):&=2^r \sum_{0<m_1<\cdots<m_r\atop m_i\equiv i\ {\rm mod}\ 2} \frac{1}{m_1^{k_1}m_2^{k_2}\cdots m_r^{k_r}}\nonumber\\
&=2^r\sum\limits_{0<n_1<\cdots<n_r} \frac{1}{(2n_1-1)^{k_1}(2n_2-2)^{k_2}\cdots (2n_r-r)^{k_r}}.
\end{align*}
Clearly, if $k_r>1$, then $\A(k_1,\ldots,k_r;x)=T(k_1,k_2,\ldots,k_r)$. There are, of course, many other generalizations of multiple zeta values, such as multiple $t$-values and $q$-multiple zeta values (see \cite{KanekoTs2024,KomatsuLuca2025}).

\begin{defn}\label{defn-GPAKZVs}(General formal Arakawa-Kaneko zeta values, GFAKZVs for short)
Let $z_j\equiv z_{j}(t)\ (j=1,2,\ldots,r)$ be meromorphic functions of $t$ with at most logarithmic singularities. For any $\bfz:=(z_1,\ldots,z_r)$, composition $\bfk:=(k_1,\ldots,k_r)$ and $p\in \N_0$, define
\begin{align}\label{defn-equ-GPAKZVs}
\xi^{(x)}_{z_0}\left(\!\! {\begin{array}{*{20}{c}}
   {{z_1},{z_2}, \ldots ,{z_r}}  \\
   {k_1,k_2,\ldots,k_r}  \\
\end{array}} ;{\begin{array}{*{20}{c}}
   {\{z_{r+1}\}_p}  \\
  \{1\}_p \\
\end{array}}\!\!\right):=\int_0^x L_{z_0}\left(\!\! {\begin{array}{*{20}{c}}
   {\{z_{r+1}\}_p}  \\
   {\{1\}_p}  \\
\end{array}} ;t\!\right) z_0(t)L_{z_0}\left(\!\! {\begin{array}{*{20}{c}}
   {{z_1},\ldots ,{z_r}}  \\
   {k_1,\ldots,k_r}  \\
\end{array}} ;t\!\right)dt.
\end{align}
\end{defn}

For example, if all $z_{j}(t)=\frac{1}{1-t}\ (j=1,2,\ldots,r+1)$ and $z_0(t)=\frac1{t}$ with $x\rightarrow 1$ in \eqref{defn-equ-GPAKZVs}, then it reduces to the \emph{Arakawa-Kaneko zeta values}~\cite{AM1999,X2021}
\begin{align*}
&\xi(k_1,k_2\ldots,k_r;p+1)=\frac{(-1)^{p}}{p!}\int\limits_{0}^1 \frac{\log^{p}(1-x){\mathrm{Li}}_{{{k_1},{k_2}, \cdots ,{k_r}}}\left(\! x \!\right)}{x}dx,
\end{align*}
while if all $z_{j}(t)=\frac{2}{1-t^2}\ (j=1,2,\ldots,r+1)$ and $z_0(t)=\frac1{t}$ with $x\rightarrow 1$, then it becomes the \emph{Kaneko-Tsumura $\psi$-values}~\cite{KanekoTs2018b,KanekoTs2019}
\begin{align*}
&\psi(k_1,k_2\ldots,k_r;p+1)=\frac{(-1)^{p}}{p!}\int\limits_{0}^1 \frac{\log^{p}\left(\!\frac{1-x}{1+x}\!\right)\A(k_1,k_2,\ldots,k_r;x)}{x}dx,
\end{align*}
and if all $z_{j}(t)=\frac{2}{1+t^2}\ (j=1,2,\ldots,r)$, $z_{r+1}=\frac{2}{1-t^2}$ and $z_0(t)=\frac1{t}$ with $x\rightarrow 1$ in \eqref{defn-GPAKZVs}, then we obtain the \emph{Kaneko-Tsumura $\bar{\psi}$-values}~\cite[Eq. (3.28)]{XuZhao2020b}
\begin{align*}
&\bar{\psi}(k_1,k_2\ldots,k_r;p+1)=\frac{(-1)^{p}}{p!}\int\limits_{0}^1 \frac{\log^{p}\left(\!\frac{1-x}{1+x}\!\right)B(k_1,k_2,\ldots,k_r;x)}{x}dx,
\end{align*}
where for $x \in \left[ { - 1,1} \right]$,
\begin{align*}
{\rm B}(k_1,\dotsc,k_r;x)&: =
\left\{
\begin{array}{ll} (-1)^m \mathrm{i} {\rm A}(k_1,\dotsc,k_{2m-1};\mathrm{i}x),
   &\quad \hbox{if $r=2m-1$};  \\
   (-1)^m{\rm A}(k_1,\dotsc,k_{2m};\mathrm{i}x), &\quad\hbox{if $r=2m$}.   \\
\end{array}
\right.\\
&=\int_0^x \frac{2dt}{1+t^2}\left(\!\frac{dt}{t}\!\right)^{k_1-1}
\cdots\frac{2dt}{1+t^2}\left(\!\frac{dt}{t}\!\right)^{k_r-1}\\
&=2^r\sum_{0<n_1<n_2<\cdots<n_r} \frac{(-1)^{n_r-r}x^{2n_r-r}}{(2n_1-1)^{k_1}(2n_2-2)^{k_2}\cdots (2n_r-r)^{k_r}}.
\end{align*}

For any composition $\bfk$ and $\Re(s)>0$, the \emph{Arakawa-Kaneko zeta function}~\cite{AM1999}, the \emph{Kaneko-Tsumura $\psi$-function}~\cite{KanekoTs2018b} and the \emph{Kaneko-Tsumura $\bar\psi$-function}~\cite{XuZhao2020b} are defined respectively by
\begin{align*}
\xi(k_1,k_2\ldots,k_r;s):=&\, \frac{1}{\Gamma(s)} \int\limits_{0}^\infty \frac{t^{s-1}}{e^t-1}\Li_{k_1,k_2,\ldots,k_r}(1-e^{-t})dt,\\
\psi(k_1,k_2\ldots,k_r;s)=&\, \frac{1}{\Gamma(s)}\int_0^\infty \frac{t^{s-1}}{\sinh(t)} {\rm A}(k_1,k_2\ldots,k_r;\tanh t/2)dt,\\
\bar\psi(k_1,k_2\ldots,k_r;s):=&\,\frac{1}{\Gamma(s)} \int\limits_{0}^\infty \frac{t^{s-1}}{\sinh(t)}{\rm B}({k_1,\dotsc,k_r};\tanh(t/2))\, dt.
\end{align*}
In addition to the works cited above, many results on Arakawa-Kaneko zeta functions and Kaneko-Tsumura $\psi$-functions can be found in~\cite{Hamahata2011,Chen2019,Coppo2010,Kuba2010,PX2019,XYZ2022,Young2014,ZhengYang2023}, while the following papers are mainly concerned with their special values~\cite{Hoshi2023,Ito2021,KanekoTs2024,LuoSi2023,Nishibiro2024,Xu2019,Yamamoto2022}. For example, several generalizations of these functions are given in \cite{Chen2019,PX2019}. Furthermore, in~\cite{AM1999,KT2018}, the special values $\xi(k_1,k_2\ldots,k_r;s)$ at positive integers are analytically computed and expressed in terms of multiple zeta values. Xu and Zhao~\cite[Thm. 4.6]{XuZhao2020a} proved that all Kaneko-Tsumura $\psi$-values can be expressed in terms of multiple $T$-values. Recently, Kawasaki \cite{Kawasaki2025} provided an explicit formula for these relations.

In this paper, we establish several explicit relations involving the formal multiple polylogarithm function and the general formal Arakawa-Kaneko zeta values by means of iterated integrals. This approach enables us to derive, in a unified way, a number of known results that were previously proved by various methods.

\section{Duality Formulas for Formal Arakawa-Kaneko Zeta Values}

For a composition $\bfk_r=(k_1,k_2,\ldots,k_r)\in \N^r$ with $|\bfk_r|:=k_1+k_2+\cdots+k_r$, we adopt the following notation:
\begin{align*}
&\overrightarrow{\bfk_j}:=(k_1,k_2,\ldots ,k_j),\quad \overleftarrow{\bfk_j}:=(k_r,k_{r-1},\ldots,k_{r+1-j}),\\
&|\overrightarrow{\bfk_j}|:=k_1+k_2+\cdots+k_j,\quad |\overleftarrow{\bfk_j}|:=k_r+k_{r-1}+\cdots+k_{r+1-j},\\
&\overrightarrow{\bfk}_r^{-}:=(k_1,\ldots,k_{r-1},k_r-1),\quad \overleftarrow{\bfk}_r^{-}:=(k_r,\ldots,k_{2},k_1-1),
\end{align*}
with $\overrightarrow{\bfk_0}=\overleftarrow{\bfk_0}:=\emptyset$ and $|\overrightarrow{\bfk_0}|=|\overleftarrow{\bfk_0}|:=0$. For $\bfz_r=(z_1,z_2,\ldots,z_r)$, we set
\begin{align*}
&\overrightarrow{\bfz_j}:=(z_1,z_2,\ldots ,z_j)\quad \text{and}\quad \overleftarrow{\bfz_j}:=(z_r,z_{r-1},\ldots,z_{r+1-j}).
\end{align*}
Building on our previous work, we now establish a more general duality formula, which allows us to derive numerous duality relations for both alternating multiple zeta values and alternating multiple $T$-values.

\begin{thm}\label{} For positive integers $p,q,r$ and a composition ${\bfk}=(k_1,\ldots,k_r)$ with $k_1,k_2,\ldots,k_r\in \N\setminus\{1\}$,
\begin{align}\label{GKTSAKF-DUALT-NOEXP}
&\xi^{(x)}_{z_0}\left(\!\! {\begin{array}{*{20}{c}}
   {\{z_1\}_{q-1},\bfz_r}  \\
   {\{1\}_{q-1},\overrightarrow{\bfk}_r^{-}}  \\
\end{array}} ;{\begin{array}{*{20}{c}}
   {\{z_{r+1}\}_p}  \\
  \{1\}_p  \\
\end{array}}\!\!\right)
-(-1)^{|{\bfk}|}\xi^{(x)}_{z_0}\left(\!\! {\begin{array}{*{20}{c}}
   {\{z_{r+1}\}_{p},z_r,\ldots,z_2}  \\
   {\{1\}_{p-1},\overleftarrow{\bfk}_r^{-}}  \\
\end{array}} ;{\begin{array}{*{20}{c}}
   {\{z_{1}\}_q}  \\
  \{1\}_q\\
\end{array}}\!\!\right)\nonumber\\
&=\sum\limits_{j=0}^{r-1} (-1)^{\mid\stackrel{\leftarrow}{{\bfk}}_j\mid}\sum\limits_{i=1}^{k_{r-j}-2}(-1)^{i-1} L_{z_0}\left(\!\! {\begin{array}{*{20}{c}}
   {\{z_{r+1}\}_p,\overleftarrow{\bfz_j}}  \\
   {\{1\}_{p-1},{\overleftarrow{\bfk}_j},i+1}  \\
\end{array}} ;x\!\right)L_{z_0}\left(\!\! {\begin{array}{*{20}{c}}
   {\{z_1\}_{q-1},{\overrightarrow{\bfz}}_{r-j}}  \\
   {\{1\}_{q-1},\overrightarrow{\bfk}_{r-j-1},k_{r-j}-i}  \\
\end{array}} ;x\!\right)   \nonumber\\
&\quad+\sum\limits_{j=0}^{r-2}(-1)^{\mid\stackrel{\leftarrow}{{\bfk}}_{j+1}\mid} \left\{\begin{array}{l} L_{z_0}\left(\!\! {\begin{array}{*{20}{c}}
   {\{z_{r+1}\}_p,{\overleftarrow{\bfz}}_j}  \\
   {\{1\}_{p-1},{\overleftarrow{\bfk}_{j+1}}}  \\
\end{array}} ;x\!\right) L_{z_0}\left(\!\! {\begin{array}{*{20}{c}}
   {\{z_1\}_{q-1},{\overrightarrow{\bfz}}_{r-j}}  \\
   {\{1\}_{q-1},{\overrightarrow{\bfk}}_{r-j-1},1}  \\
\end{array}} ;x\!\right)   \\
\quad\quad-L_{z_0}\left(\!\! {\begin{array}{*{20}{c}}
   {\{z_{r+1}\}_p,{\overleftarrow{\bfz}}_{j+1}}  \\
   {\{1\}_{p-1},{\overleftarrow{\bfk}_{j+1}},1}  \\
\end{array}} ;x\!\right) L_{z_0}\left(\!\! {\begin{array}{*{20}{c}}
   {\{z_1\}_{q-1},{\overrightarrow{\bfz}}_{r-j-1}}  \\
   {\{1\}_{q-1},{\overrightarrow{\bfk}}_{r-j-1}}  \\
\end{array}} ;x\!\right) \end{array}  \right\}.
\end{align}
\end{thm}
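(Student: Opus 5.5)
The plan is to regard both $\xi$-terms as single integrals of a product of two formal polylogarithms, and then to integrate by parts repeatedly. Each step moves one letter of the iterated-integral word from one factor to the other, and the boundary terms that appear should be exactly the products on the right-hand side.

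First I would write the left $\xi$-term, via Definition~\ref{defn-GPAKZVs} and \eqref{diff-RF}, as
\[
\int_0^x L_{z_0}\left(\!\! {\begin{array}{*{20}{c}} {\{z_{r+1}\}_p} \\ {\{1\}_p} \\ \end{array}} ;t\!\right)\, d\,L_{z_0}\left(\!\! {\begin{array}{*{20}{c}} {\{z_1\}_{q-1},\bfz_r} \\ {\{1\}_{q-1},\bfk_r} \\ \end{array}} ;t\!\right).
\]
The point is that $z_0(t)\,L_{z_0}(\cdots,k_r-1;t)$ is the derivative of $L_{z_0}(\cdots,k_r;t)$, which is legitimate because $k_r\ge 2$.

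Next I would introduce the intermediate products
\[
G(t)=L_{z_0}\left(\!\! {\begin{array}{*{20}{c}} {\{z_{r+1}\}_p,\overleftarrow{\bfz_j}} \\ {\{1\}_{p-1},\overleftarrow{\bfk}_j,\ast} \\ \end{array}} ;t\!\right) L_{z_0}\left(\!\! {\begin{array}{*{20}{c}} {\{z_1\}_{q-1},\overrightarrow{\bfz}_{r-j}} \\ {\{1\}_{q-1},\overrightarrow{\bfk}_{r-j-1},\ast} \\ \end{array}} ;t\!\right),
\]
where the starred last indices record how many $z_0\,dt$ letters have been transferred so far. By \eqref{diff-RF}, the derivative of each factor strips its last letter, which is either a $z_0$ or the leading $z$ of a block. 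One integration by parts, $\int_0^x A'B=[AB]_0^x-\int_0^x AB'$, therefore transfers one letter from the end of the right (``head'') factor to the end of the left (``reversed tail'') factor. Each transfer introduces a factor $-1$ and produces the boundary term $G(x)$; the lower limit contributes nothing since every $L_{z_0}$ vanishes at $0$.

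I would then do the transfers in blocks. Inside block $k_{r-j}$ there are $k_{r-j}-1$ letters $z_0\,dt$. Moving them one at a time produces the terms indexed by $i=1,\dots,k_{r-j}-2$ in the first double sum, with sign $(-1)^{|\overleftarrow{\bfk}_j|}(-1)^{i-1}$. The accumulated sign $(-1)^{|\overleftarrow{\bfk}_j|}$ counts all letters moved in earlier blocks, namely $k_r+\cdots+k_{r+1-j}$ of them, since each block has $k$ letters: one $z$ and $k-1$ copies of $z_0$. When the right factor's last index reaches $1$, the second case of \eqref{diff-RF} applies, and the next letter moved is the $z_{r-j}$ itself. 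This is the step that generates the two products in the braces: one from the boundary just before $z_{r-j}$ is moved and one just after. Together they account for the sign $(-1)^{|\overleftarrow{\bfk}_{j+1}|}$, the appearance of the final index $1$, and the shrinking of $\overrightarrow{\bfz}_{r-j}$ to $\overrightarrow{\bfz}_{r-j-1}$. The hypothesis $k_i\ge 2$ guarantees that each block has this uniform two-phase shape.

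The process stops after block $k_1$. At that point the right factor is $L_{z_0}\left(\!\! {\begin{array}{*{20}{c}} {\{z_1\}_{q-1}} \\ {\{1\}_{q-1}} \\ \end{array}} ;t\!\right)$, still to be differentiated once more to produce $z_1$, and the left factor carries $\{z_{r+1}\}_p,z_r,\dots,z_2$ with indices $\{1\}_{p-1},\overleftarrow{\bfk}_r^{-}$. Rewriting the remaining integral by Definition~\ref{defn-GPAKZVs} then yields the second $\xi$-term, with total sign $(-1)^{|\bfk|}$.

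The main obstacle is the bookkeeping at block boundaries, i.e.\ the $j=r-1$ endpoint. There the terms of the second sum (which runs only to $r-2$) must merge with the final $\xi$, and the index $k_1-1$ in $\overleftarrow{\bfk}_r^{-}$ together with the $\{1\}_{p-1}$ versus $\{1\}_p$ shift has to match exactly. I would check this first on $r=1$ and $r=2$ with small $k_i$ before writing the general induction on the number of transferred letters.
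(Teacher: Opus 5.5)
Your strategy is the paper's: repeated integration by parts, each step moving one letter from the end of the head factor to the end of the tail factor, with the boundary terms supplying the products on the right. However, your opening move attaches $z_0$ to the wrong factor, and from that representation the integration by parts runs in the opposite direction. With the head carrying the full $\bfk_r$ you get
\begin{align*}
&\int_0^x L_{z_0}\left(\begin{array}{c}\{z_{r+1}\}_p\\ \{1\}_p\end{array};t\right)dL_{z_0}\left(\begin{array}{c}\{z_1\}_{q-1},\bfz_r\\ \{1\}_{q-1},\bfk_r\end{array};t\right)=L_{z_0}\left(\begin{array}{c}\{z_{r+1}\}_p\\ \{1\}_p\end{array};x\right)L_{z_0}\left(\begin{array}{c}\{z_1\}_{q-1},\bfz_r\\ \{1\}_{q-1},\bfk_r\end{array};x\right)\\
&\qquad-\int_0^x L_{z_0}\left(\begin{array}{c}\{z_1\}_{q-1},\bfz_r\\ \{1\}_{q-1},\bfk_r\end{array};t\right)z_{r+1}(t)\,L_{z_0}\left(\begin{array}{c}\{z_{r+1}\}_{p-1}\\ \{1\}_{p-1}\end{array};t\right)dt.
\end{align*}
This boundary term appears nowhere on the right-hand side of the theorem. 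The new integrand has moved a $z_{r+1}$ from the tail into the head, and iterating $p$ times yields exactly the expansion \eqref{equ-L-Fun-relation}, not the duality. In your rule $\int_0^x A'B=[AB]_0^x-\int_0^x AB'$, the free one-form is absorbed into $A$ and the last letter of $B$ is released, so letters flow from $B$ into $A$. In your representation $A$ is the head.

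The repair, which is exactly the paper's first step, is to absorb $z_0$ into the tail instead:
\begin{align*}
L_{z_0}\left(\begin{array}{c}\{z_{r+1}\}_p\\ \{1\}_p\end{array};t\right)z_0(t)=\frac{d}{dt}L_{z_0}\left(\begin{array}{c}\{z_{r+1}\}_p\\ \{1\}_{p-1},2\end{array};t\right).
\end{align*}
Then the first $\xi$-term is $\int_0^x L_{z_0}\left(\begin{array}{c}\{z_1\}_{q-1},\bfz_r\\ \{1\}_{q-1},\overrightarrow{\bfk}_r^{-}\end{array};t\right)dL_{z_0}\left(\begin{array}{c}\{z_{r+1}\}_p\\ \{1\}_{p-1},2\end{array};t\right)$. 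Its first boundary term is the $j=0$, $i=1$ summand (or the $j=0$ brace term if $k_r=2$), and the remaining integral has the same shape with one more $z_0$ moved.

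Keep the invariant that the integrand is (tail)$\,\cdot\, w(t)\,\cdot\,$(head) for a single free one-form $w\in\{z_0,z_{r-j}\}$, and always absorb $w$ into the tail before integrating by parts. With that, your block bookkeeping is correct and coincides with the paper's: the $k_{r-j}-2$ summands of the first sum, then the two brace terms straddling the transfer of $z_{r-j}$, with accumulated sign $(-1)^{|\overleftarrow{\bfk}_{j+1}|}$.

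One smaller slip at the end: the terminal head is $L_{z_0}\left(\begin{array}{c}\{z_1\}_q\\ \{1\}_q\end{array};t\right)$, not the version with $\{z_1\}_{q-1}$. The $z_1$ leading block $k_1$ is never transferred, which is why the tail reads $z_r,\ldots,z_2$. In block $k_1$ you simply stop after $k_1-2$ transfers. The remaining integral, with sign $(-1)^{|\overleftarrow{\bfk}_{r-1}|+k_1-2}=(-1)^{|\bfk|}$, is by definition the second $\xi$-term, so nothing has to merge. This is why the brace sum ends at $j=r-2$.
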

\begin{proof}
First, observe that
\begin{align*}
&\xi^{(x)}_{z_0}\left(\!\! {\begin{array}{*{20}{c}}
   {\{z_1\}_{q-1},\bfz_r}  \\
   {\{1\}_{q-1},\overrightarrow{\bfk}_r^{-}}  \\
\end{array}} ;{\begin{array}{*{20}{c}}
   {\{z_{r+1}\}_p}  \\
  \{1\}_p  \\
\end{array}}\!\!\right)=\int_0^x L_{z_0}\left(\!\! {\begin{array}{*{20}{c}}
   {\{z_{r+1}\}_p}  \\
   {{\{1\}_p}}  \\
\end{array}} ;t\!\right) z_0(t) L_{z_0}\left(\!\! {\begin{array}{*{20}{c}}
   {\{z_1\}_{q-1},\bfz_r}  \\
   {\{1\}_{q-1},\overrightarrow{\bfk}_r^{-}}  \\
\end{array}} ;t\!\right) dt.
\end{align*}
Applying \eqref{diff-RF} and integrating by parts, we obtain
\begin{align*}
&\xi^{(x)}_{z_0}\left(\!\! {\begin{array}{*{20}{c}}
   {\{z_1\}_{q-1},\bfz_r}  \\
   {\{1\}_{q-1},\overrightarrow{\bfk}_r^{-}}  \\
\end{array}} ;{\begin{array}{*{20}{c}}
   {\{z_{r+1}\}_p}  \\
  \{1\}_p  \\
\end{array}}\!\!\right)\nonumber\\
&=\int_0^x L_{z_0}\left(\!\! {\begin{array}{*{20}{c}}
   {\{z_1\}_{q-1},\bfz_r}  \\
   {\{1\}_{q-1},\overrightarrow{\bfk}_r^{-}}  \\
\end{array}} ;t\!\right)dL_{z_0}\left(\!\! {\begin{array}{*{20}{c}}
   {\{z_{r+1}\}_{p}}  \\
   {{\{1\}_{p-1}},2}  \\
\end{array}} ;t\!\right)\\
&=L_{z_0}\left(\!\! {\begin{array}{*{20}{c}}
   {\{z_{r+1}\}_{p}}  \\
   {{\{1\}_{p-1}},2}  \\
\end{array}} ;x\!\right) L_{z_0}\left(\!\! {\begin{array}{*{20}{c}}
   {\{z_1\}_{q-1},\bfz_r}  \\
   {\{1\}_{q-1},\overrightarrow{\bfk}_r^{-}}  \\
\end{array}} ;x\!\right)\\
&\quad-\int_0^x L_{z_0}\left(\!\! {\begin{array}{*{20}{c}}
   {\{z_{r+1}\}_{p}}  \\
   {{\{1\}_{p-1}},2}  \\
\end{array}} ;t\!\right)z_0(t) L_{z_0}\left(\!\! {\begin{array}{*{20}{c}}
   {\{z_1\}_{q-1},\bfz_r}  \\
   {\{1\}_{q-1},k_1,\ldots,k_{r-1},k_r-2}  \\
\end{array}} ;t\!\right)dt\\
&=\cdots\\
&=\sum_{i=1}^{k_r-2}(-1)^{i-1} L_{z_0}\left(\!\! {\begin{array}{*{20}{c}}
   {\{z_{r+1}\}_{p}}  \\
   {{\{1\}_{p-1}},i+1}  \\
\end{array}} ;x\!\right) L_{z_0}\left(\!\! {\begin{array}{*{20}{c}}
   {\{z_1\}_{q-1},\bfz_r}  \\
   {\{1\}_{q-1},k_1,\ldots,k_{r-1},k_r-i}  \\
\end{array}} ;x\!\right) \\
&\quad+(-1)^{k_r-2} \int_0^x  L_{z_0}\left(\!\! {\begin{array}{*{20}{c}}
   {\{z_{r+1}\}_{p}}  \\
   {{\{1\}_{p-1}},k_r-1}  \\
\end{array}} ;t\!\right) z_0(t) L_{z_0}\left(\!\! {\begin{array}{*{20}{c}}
   {\{z_1\}_{q-1},\bfz_r}  \\
   {\{1\}_{q-1},k_1,\ldots,k_{r-1},1}  \\
\end{array}} ;t\!\right) dt\\
&=\sum_{i=1}^{k_r-2}(-1)^{i-1} L_{z_0}\left(\!\! {\begin{array}{*{20}{c}}
   {\{z_{r+1}\}_{p}}  \\
   {{\{1\}_{p-1}},i+1}  \\
\end{array}} ;x\!\right) L_{z_0}\left(\!\! {\begin{array}{*{20}{c}}
   {\{z_1\}_{q-1},\bfz_r}  \\
   {\{1\}_{q-1},k_1,\ldots,k_{r-1},k_r-i}  \\
\end{array}} ;x\!\right) \\
&\quad+(-1)^{k_r}\left\{\begin{array}{l}  L_{z_0}\left(\!\! {\begin{array}{*{20}{c}}
   {\{z_{r+1}\}_{p}}  \\
   {{\{1\}_{p-1}},k_r}  \\
\end{array}} ;x\!\right) L_{z_0}\left(\!\! {\begin{array}{*{20}{c}}
   {\{z_1\}_{q-1},\bfz_r}  \\
   {\{1\}_{q-1},k_1,\ldots,k_{r-1},1}  \\
\end{array}} ;x\!\right) \\ - L_{z_0}\left(\!\! {\begin{array}{*{20}{c}}
   {\{z_{r+1}\}_{p},z_r}  \\
   {{\{1\}_{p-1}},k_r,1}  \\
\end{array}} ;x\!\right) L_{z_0}\left(\!\! {\begin{array}{*{20}{c}}
   {\{z_1\}_{q-1},\overrightarrow{\bfz}_{r-1}}  \\
   {\{1\}_{q-1},k_1,\ldots,k_{r-1}}  \\
\end{array}} ;x\!\right) \end{array}\right\}\\
&\quad+(-1)^{k_r} \int_0^x L_{z_0}\left(\!\! {\begin{array}{*{20}{c}}
   {\{z_{r+1}\}_p,z_r}  \\
   {{\{1\}_{p-1}},k_r,1}  \\
\end{array}} ;t\!\right)z_0(t) L_{z_0}\left(\!\! {\begin{array}{*{20}{c}}
   {\{z_1\}_{q-1},\overrightarrow{\bfz}_{r-1}}  \\
   {\{1\}_{q-1},\overrightarrow{\bfk}_{r-2},k_{r-1}-1}  \\
\end{array}} ;t\!\right)dt.
\end{align*}
Repeating this process $r-1$ times yields the desired formula.
\end{proof}

If we take $z_{j}(t)=\frac{1}{1-t}$ or $z_{j}(t)=\frac{2}{1-t^2}$ for $j=1,\ldots,r+1$ and $z_0(t)=\frac1{t}$ with $x\to 1$, we recover the general duality relations for Arakawa-Kaneko zeta values and Kaneko-Tsumura $\psi$-values; see~\cite[Proposition 3.1]{X2021} and~\cite[Theorem 3.6]{PX2019}.

We note that for any $x$ for which $\int_0^{x} z(u)du$ converges, the shuffle relations give
\begin{align}\label{equ-L-TRAN}
L_{z_0}\left(\!\! {\begin{array}{*{20}{c}}
   {\{z\}_p}  \\
   {{\{1\}_p}}  \\
\end{array}} ;t\!\right)
=&\,  \frac1{p!} \Big(\int_0^x  z(u)du -\int_t^x  z(u)du \Big)^p \notag\\
=&\, \sum_{j=0}^p (-1)^j \frac{\Big(\int_0^{x} z(u)du\Big)^{p-j}}{(p-j)!}\int_t^x \Big(z(u)du\Big)^j.
\end{align}
Applying \eqref{equ-L-TRAN} yields
\begin{align}\label{equ-L-Fun-relation}
\xi^{(x)}_{z_0}\left(\!\! {\begin{array}{*{20}{c}}
   {\{z_1\}_{q-1},\bfz_r}  \\
   {\{1\}_{q-1},\overrightarrow{\bfk}_r^{-}}  \\
\end{array}} ;{\begin{array}{*{20}{c}}
   {\{z\}_p}  \\
  \{1\}_p  \\
\end{array}}\!\!\right)
=\sum_{j=0}^p (-1)^j \frac{\Big(\int_0^{x} z(u)du\Big)^{p-j}}{(p-j)!}  L_{z_0}\left(\!\! {\begin{array}{*{20}{c}}
   {\{z_1\}_{q-1},\bfz_r,\{z\}_j}  \\
   {\{1\}_{q-1},\bfk_r,\{1\}_j}  \\
\end{array}} ;x\!\right).
\end{align}
Hence, substituting \eqref{equ-L-Fun-relation} with $z=z_{r+1}$ into \eqref{GKTSAKF-DUALT-NOEXP} gives the following corollary.

\begin{cor} For positive integers $p,q,r$ and a composition ${\bfk}=(k_1,\ldots,k_r)$ with $k_1,k_2,\ldots,k_r\in \N\setminus\{1\}$,
\begin{align}\label{GKTSAKF-DUALT-EXP}
&\sum_{j=0}^p (-1)^j \frac{\Big(\int_0^{x} z_{r+1}(u)du\Big)^{p-j}}{(p-j)!}  L_{z_0}\left(\!\! {\begin{array}{*{20}{c}}
   {\{z_1\}_{q-1},\bfz_r,\{z_{r+1}\}_j}  \\
   {\{1\}_{q-1},\bfk_r,\{1\}_j}  \\
\end{array}} ;x\!\right)
\nonumber\\&\quad-(-1)^{|{\bfk}|}\sum_{j=0}^q (-1)^j \frac{\Big(\int_0^{x} z_{1}(u)du\Big)^{q-j}}{(q-j)!} L_{z_0}\left(\!\! {\begin{array}{*{20}{c}}
   {\{z_{r+1}\}_{p},z_r,\ldots,z_2,\{z_{1}\}_j}  \\
   {\{1\}_{p-1},\overleftarrow{\bfk}_r,\{1\}_j}  \\
\end{array}} ;x\!\right)\nonumber\\
&=\sum\limits_{j=0}^{r-1} (-1)^{\mid\stackrel{\leftarrow}{{\bfk}}_j\mid}\sum\limits_{i=1}^{k_{r-j}-2}(-1)^{i-1} L_{z_0}\left(\!\! {\begin{array}{*{20}{c}}
   {\{z_{r+1}\}_p,\overleftarrow{\bfz_j}}  \\
   {\{1\}_{p-1},{\overleftarrow{\bfk}_j},i+1}  \\
\end{array}} ;x\!\right)L_{z_0}\left(\!\! {\begin{array}{*{20}{c}}
   {\{z_1\}_{q-1},{\overrightarrow{\bfz}}_{r-j}}  \\
   {\{1\}_{q-1},\overrightarrow{\bfk}_{r-j-1},k_{r-j}-i}  \\
\end{array}} ;x\!\right)   \nonumber\\
&\quad+\sum\limits_{j=0}^{r-2}(-1)^{\mid\stackrel{\leftarrow}{{\bfk}}_{j+1}\mid} \left\{\begin{array}{l} L_{z_0}\left(\!\! {\begin{array}{*{20}{c}}
   {\{z_{r+1}\}_p,{\overleftarrow{\bfz}}_j}  \\
   {\{1\}_{p-1},{\overleftarrow{\bfk}_{j+1}}}  \\
\end{array}} ;x\!\right) L_{z_0}\left(\!\! {\begin{array}{*{20}{c}}
   {\{z_1\}_{q-1},{\overrightarrow{\bfz}}_{r-j}}  \\
   {\{1\}_{q-1},{\overrightarrow{\bfk}}_{r-j-1},1}  \\
\end{array}} ;x\!\right)   \\
\quad\quad-L_{z_0}\left(\!\! {\begin{array}{*{20}{c}}
   {\{z_{r+1}\}_p,{\overleftarrow{\bfz}}_{j+1}}  \\
   {\{1\}_{p-1},{\overleftarrow{\bfk}_{j+1}},1}  \\
\end{array}} ;x\!\right) L_{z_0}\left(\!\! {\begin{array}{*{20}{c}}
   {\{z_1\}_{q-1},{\overrightarrow{\bfz}}_{r-j-1}}  \\
   {\{1\}_{q-1},{\overrightarrow{\bfk}}_{r-j-1}}  \\
\end{array}} ;x\!\right) \end{array}  \right\}.
\end{align}
\end{cor}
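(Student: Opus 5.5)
The corollary follows from the duality formula \eqref{GKTSAKF-DUALT-NOEXP} by substituting \eqref{equ-L-Fun-relation} into each of its two $\xi^{(x)}_{z_0}$-terms; the right-hand side is left as it is.

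\emph{First term.} Apply \eqref{equ-L-Fun-relation} with $z=z_{r+1}$. Its inner data $\bigl(\{z_1\}_{q-1},\bfz_r;\{1\}_{q-1},\overrightarrow{\bfk}_r^{-}\bigr)$ are exactly those of \eqref{equ-L-Fun-relation}. This gives the first sum on the left of \eqref{GKTSAKF-DUALT-EXP}.

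To make this step self-contained, I would also re-derive \eqref{equ-L-Fun-relation}. Write
\[
L_{z_0}\bigl(\{z\}_p;\{1\}_p;t\bigr)=\frac{1}{p!}\Bigl(\int_0^t z\,du\Bigr)^p
\]
and expand via \eqref{equ-L-TRAN} as $\sum_j(-1)^j\frac{(\int_0^x z)^{p-j}}{(p-j)!}\int_t^x (z\,du)^j$. Substituting into the definition of $\xi^{(x)}_{z_0}$ gives, for each $j$, the integral
\[
\int_0^x L_{z_0}\bigl(\{z_1\}_{q-1},\bfz_r;\{1\}_{q-1},\overrightarrow{\bfk}_r^{-};t\bigr)\,z_0(t)\,dt\int_t^x (z\,du)^j .
\]
By \eqref{diff-RF}, the factor $L_{z_0}(\cdots;t)\,z_0(t)\,dt$ is the differential of $L_{z_0}$ with last index raised from $k_r-1$ to $k_r$. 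Hence the integral concatenates into the single iterated integral $L_{z_0}\bigl(\{z_1\}_{q-1},\bfz_r,\{z\}_j;\{1\}_{q-1},\bfk_r,\{1\}_j;x\bigr)$.

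\emph{Second term.} Apply the same identity with $z=z_1$ and $p$ replaced by $q$. The inner data are $\bigl(\{z_{r+1}\}_p,z_r,\ldots,z_2;\{1\}_{p-1},\overleftarrow{\bfk}_r^{-}\bigr)$, whose last index is $k_1-1$. Multiplying by $z_0(t)\,dt$ again raises that index by one. The result is $L_{z_0}\bigl(\{z_{r+1}\}_p,z_r,\ldots,z_2,\{z_1\}_j;\{1\}_{p-1},\overleftarrow{\bfk}_r,\{1\}_j;x\bigr)$, weighted by $(-1)^j(\int_0^x z_1)^{q-j}/(q-j)!$, with the overall sign $-(-1)^{|\bfk|}$ carried over.

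One bookkeeping point needs care here. The string in the second term has $p+r-1$ letters $z$, while its index string $\{1\}_{p-1},\overleftarrow{\bfk}_r$ has $p-1+r$ entries. These match, so the concatenation is legitimate. The last index $k_1-1\ge1$ causes no degeneracy, since $k_1\ge2$.

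\emph{Main obstacle.} The only delicate step is justifying the Fubini/concatenation when the endpoint singularities are merely logarithmic. This is why the hypothesis that $\int_0^x z(u)\,du$ converges is needed. With that hypothesis, the convergence carries over to the iterated integrals, and the rearrangement is valid termwise.
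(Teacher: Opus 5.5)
Your proposal is correct and is essentially the paper's argument: substitute \eqref{equ-L-Fun-relation} into both $\xi^{(x)}_{z_0}$-terms of \eqref{GKTSAKF-DUALT-NOEXP} and leave the right-hand side unchanged. You are slightly more explicit than the paper, whose one-line justification mentions only $z=z_{r+1}$. The second term in fact needs the same concatenation identity with $z=z_1$, with $q$ in place of $p$, and with the reversed inner data $(\{z_{r+1}\}_p,z_r,\ldots,z_2;\{1\}_{p-1},\overleftarrow{\bfk}_r^{-})$, which is exactly what you check.
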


Specializing $z_{j}(t)=\frac{1}{1+t}$ for all $j=1,\ldots,r+1$ and $z_0(t)=\frac1{t}$ with $x\to 1$ in \eqref{GKTSAKF-DUALT-EXP} gives the following corollary.

\begin{cor}
For positive integers $p,q,r$ and a composition $\bfk=(k_1,\ldots,k_r)$ with $k_1,k_2,\ldots,k_r\in \N\setminus\{1\}$,
\begin{align}\label{thm-duality-formula-generalAMZVs}
&\sum_{j=0}^p (-1)^j \frac{\log^{p-j}(2)}{(p-j)!}  \barz \Big(\{1\}_{q-1},\bfk_r,\{1\}_j\Big)- (-1)^{|{\bfk}|}\sum_{j=0}^q (-1)^j \frac{\log^{q-j}(2)}{(q-j)!} \barz \Big( \{1\}_{p-1},\overleftarrow{\bfk}_r,\{1\}_j\Big)\nonumber\\
&=\sum\limits_{j=0}^{r-1} (-1)^{\mid\stackrel{\leftarrow}{{\bfk}}_j\mid}\sum\limits_{i=1}^{k_{r-j}-2}(-1)^{i-1} \barz \Big(\{1\}_{p-1},{\overleftarrow{\bfk}_j},i+1\Big) \barz \Big(\{1\}_{q-1},\overrightarrow{\bfk}_{r-j-1},k_{r-j}-i\Big)  \nonumber\\
&\quad+\sum\limits_{j=0}^{r-2}(-1)^{\mid\stackrel{\leftarrow}{{\bfk}}_{j+1}\mid} \left\{\begin{array}{l} \barz \Big(\{1\}_{p-1},{\overleftarrow{\bfk}_{j+1}}\Big) \barz \Big(\{1\}_{q-1},{\overrightarrow{\bfk}}_{r-j-1},1\Big)  \\
\quad\quad-  \barz \Big(\{1\}_{p-1},{\overleftarrow{\bfk}_{j+1}},1\Big) \barz \Big(\{1\}_{q-1},{\overrightarrow{\bfk}}_{r-j-1}\Big)\end{array}  \right\},
\end{align}
where for any positive integers $k_1,\dotsc,k_r$,
\begin{align*}
\barz(k_1,\dotsc,k_{r-1},{k_r}):=\sum\limits_{0<n_1<\cdots<n_r} \frac{(-1)^{n_r}}{n_1^{k_1}\dotsm n_r^{k_r}}
=\int_0^1 \frac{dt}{1+t}\left(\!\frac{dt}{t}\!\right)^{k_1-1}
\cdots\frac{dt}{1+t}\left(\!\frac{dt}{t}\!\right)^{k_r-1}.
\end{align*}
\end{cor}

Similarly, putting $z_{j}(t)=\frac{2}{1+t^2}$ for all $j=1,\ldots,r+1$ and $z_0(t)=\frac1{t}$ with $x\to 1$ in \eqref{GKTSAKF-DUALT-EXP} yields the following corollary.

\begin{cor}
For positive integers $p,q,r$ and a composition ${\bfk}=(k_1,\ldots,k_r)$ with $k_1,k_2,\ldots,k_r\in \N\setminus\{1\}$,
\begin{align*}
&(-1)^{q+r-1} \sum_{j=0}^{p} \frac{\left(\!\frac{\pi}{2}\!\right)^{p-j}}{(p-j)!}\overline{T}\left(\!\{1\}_{q-1},\bfk_r,\{1\}_j\!\right)-(-1)^{|{\bfk}|}(-1)^{p+r-1}\sum_{j=0}^{q} \frac{\left(\!\frac{\pi}{2}\!\right)^{q-j}}{(q-j)!}\overline{T}\left(\!\{1\}_{p-1},\overleftarrow{\bfk_r},\{1\}_j\!\right)\\
&=(-1)^{p+q+r}\sum\limits_{j=0}^{r-1} (-1)^{\mid\stackrel{\leftarrow}{{\bfk}}_j\mid}\sum\limits_{i=1}^{k_{r-j}-2} (-1)^{i} \overline{T}\left(\!\{1\}_{p-1},\overleftarrow{\bfk}_j,i+1\!\right)\overline{T}\left(\!\{1\}_{q-1},\overrightarrow{\bfk}_{r-j-1},k_{r-j}-i\!\right)\\
&\quad-(-1)^{p+q+r}\sum\limits_{j=0}^{r-2} (-1)^{\mid\stackrel{\leftarrow}{{\bfk}}_{j+1}\mid} \left\{\begin{array}{l} \overline{T}\left(\!\{1\}_{p-1},\overleftarrow{\bfk}_{j+1}\!\right)\overline{T}\left(\!\{1\}_{q-1},\overrightarrow{\bfk}_{r-j-1},1\!\right)\\ -\overline{T}\left(\!\{1\}_{p-1},\overleftarrow{\bfk}_{j+1},1\!\right)\overline{T}\left(\!\{1\}_{q-1},\overrightarrow{\bfk}_{r-j-1}\!\right)\end{array}\right\},
\end{align*}
where for any positive integers $k_1,\dotsc,k_r$,
\begin{align*}
\overline{T}(k_1,\dotsc,k_{r-1},{k_r}):&=2^r\sum\limits_{0<n_1<\cdots<n_r} \frac{(-1)^{n_r}}{(2n_1-1)^{k_1}\dotsm (2n_{r-1}-r+1)^{k_{r-1}}(2n_r-r)^{k_r}}\nonumber\\
&=(-1)^r\int_0^1 \frac{2dt}{1+t^2}\left(\!\frac{dt}{t}\!\right)^{k_1-1}
\cdots\frac{2dt}{1+t^2}\left(\!\frac{dt}{t}\!\right)^{k_r-1}.
\end{align*}
We call these \emph{multiple $\overline{T}$-values} \emph{(M$\overline{\rm T}$Vs)}.
\end{cor}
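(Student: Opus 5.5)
The plan is to specialize the preceding Corollary, i.e.\ \eqref{GKTSAKF-DUALT-EXP}, by taking $z_0(t)=\frac1t$ and $z_j(t)=\frac{2}{1+t^2}$ for all $j=1,\ldots,r+1$, and then letting $x\to1$. The work is a dictionary between the $L_{z_0}$ appearing there and the M$\overline{\rm T}$Vs, followed by sign bookkeeping.

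\emph{Dictionary.} From the integral representation of $\overline{T}$ in the statement and Definition~\ref{defn-fmplf}, for a depth-$d$ index,
\[
L_{z_0}\left(\!\! {\begin{array}{*{20}{c}}
   {\{z_1\}_{d}}  \\
   {k_1,\ldots,k_d}  \\
\end{array}} ;1\!\right)=(-1)^{d}\,\overline{T}(k_1,\ldots,k_d).
\]
This holds for every composition, including those ending in $1$. Also $\int_0^1 \frac{2du}{1+u^2}=\frac{\pi}{2}$.

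\emph{Passing to $x=1$.} The forms $\frac{2dt}{1+t^2}$ are regular on $[0,1]$, and every word starts with such a form, so the integrand is harmless at $t=0$. Hence all iterated integrals in \eqref{GKTSAKF-DUALT-EXP} are continuous up to $x=1$, and the limit may be taken termwise.

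\emph{Sign bookkeeping.} Substitute the dictionary into each term and record the total depth.

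In the first sum on the left, the word $\{z_1\}_{q-1},\bfz_r,\{z_{r+1}\}_j$ has depth $q-1+r+j$. Its sign $(-1)^{q-1+r+j}$ combines with the explicit $(-1)^j$ to give the uniform factor $(-1)^{q+r-1}$.

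In the second sum the depth is $p+(r-1)+j$. This gives the factor $(-1)^{p+r-1}$, which multiplies the existing $-(-1)^{|\bfk|}$.

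On the right, in the double sum the two factors have depths $p+j$ and $q-1+r-j$. The product therefore carries $(-1)^{p+q+r-1}$, and together with $(-1)^{i-1}$ this becomes $(-1)^{p+q+r}(-1)^{i}$.

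In the braced block, each of the two products has total depth $p+q+r-1$. The first product has depths $(p+j)+(q-1+r-j)$; the second has depths $(p+j+1)+(q+r-j-2)$. So the whole brace acquires the common factor $-(-1)^{p+q+r}$ with its internal minus sign unchanged. This matches the stated right-hand side.

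The only real obstacle is this sign tracking. In particular, the depth count must use the length of the $z$-list, which equals the length of the $k$-list in each term; in the braced terms the $\overleftarrow{\bfk}_{j+1}$ versus $\overleftarrow{\bfz}_j$ offset is compensated by the $\{1\}_{p-1}$ versus $\{z_{r+1}\}_p$ offset. I would check each of the four term types separately as above and then collect the factors.
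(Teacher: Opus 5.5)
Your proposal is correct and follows the paper's own route: specialize \eqref{GKTSAKF-DUALT-EXP} at $z_j(t)=\frac{2}{1+t^2}$, $z_0(t)=\frac1t$, $x\to1$, turn each depth-$d$ term $L_{z_0}$ into $(-1)^d\overline{T}$, and use $\int_0^1\frac{2\,du}{1+u^2}=\frac{\pi}{2}$. The paper gives only that one-line specialization, and your depth counts for all four term types are right, producing exactly the signs $(-1)^{q+r-1}$, $(-1)^{p+r-1}$, $(-1)^{p+q+r}(-1)^i$ and $-(-1)^{p+q+r}$ in the statement.
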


\begin{thm} For $a,b\in \N_0:=\N\cup \{0\}$ and $z=z(t)$,
\begin{align}\label{equ-aLFR}
L_{z_0}\left(\!\!  {\begin{array}{*{20}{c}}
   {\{z\}_{a+b+1}}  \\
   {{\{1\}_a},2,\{1\}_{b}}  \\
\end{array}} ;x\! \!\right)=\sum_{k=0}^{b} (-1)^k \frac{\left(\!\! L_{z_0}\left(\!\! {\begin{array}{*{20}{c}}
   {z}  \\
   {1}  \\
\end{array}} ;x\!\right)\! \!\right)^{b+k}}{(b-k)!}\binom{a+1+k}{k} L_{z_0}\left(\!\!  {\begin{array}{*{20}{c}}
   {\{z\}_{a+1+k}}  \\
   {{\{1\}_{a+k}},2}  \\
\end{array}} ;x\! \!\right).
\end{align}
\end{thm}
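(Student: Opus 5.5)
The plan is to prove \eqref{equ-aLFR} directly from the shuffle product of iterated integrals, then invert a triangular system with a binomial identity. As a sanity check on weights, the power of $L_{z_0}\left(\!\! {\begin{array}{*{20}{c}} {z}\\ {1}\\ \end{array}} ;x\!\right)$ must be $b-k$, matching the factorial $(b-k)!$. The case $b=1$ already forces this: it gives $L(\{1\}_a,2,1)=uL(\{1\}_a,2)-(a+2)L(\{1\}_{a+1},2)$. So I will prove the identity with exponent $b-k$, reading the printed exponent $b+k$ as a typo.

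\textbf{Setup.} Write $u:=L_{z_0}\left(\!\! {\begin{array}{*{20}{c}} {z}\\ {1}\\ \end{array}} ;x\!\right)=\int_0^x z(t)\,dt$. By \eqref{equ-L-TRAN}, or simply the shuffle relation, $L_{z_0}\left(\!\! {\begin{array}{*{20}{c}} {\{z\}_m}\\ {\{1\}_m}\\ \end{array}} ;x\!\right)=u^m/m!$. In word notation, with letters $z\,dt$ and $z_0\,dt$, the function $L_{z_0}\left(\!\! {\begin{array}{*{20}{c}} {\{z\}_{n+1}}\\ {\{1\}_{n},2}\\ \end{array}} ;x\!\right)$ is the iterated integral of the word $z^{n+1}z_0$. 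Set
\[
W_j:=L_{z_0}\left(\!\! {\begin{array}{*{20}{c}} {\{z\}_{a+b+1}}\\ {\{1\}_{a+b-j},2,\{1\}_j}\\ \end{array}} ;x\!\right),
\]
which is the integral of the word $z^{a+1+b-j}z_0z^j$. The left-hand side of \eqref{equ-aLFR} is then $W_b$.

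\textbf{Shuffle step.} Fix $0\le k\le b$ and shuffle the word $z^{b-k}$ with $z^{a+1+k}z_0$. Each shuffle is determined by how many, say $j$, of the $b-k$ inserted letters land after $z_0$. Since all $z$-letters are identical, the number of shuffles producing $z^{a+1+b-j}z_0z^j$ is the number of ways to interleave the remaining $b-k-j$ letters with the $a+1+k$ letters before $z_0$, namely $\binom{a+1+b-j}{a+1+k}$. Chen's shuffle product formula therefore gives
\[
\frac{u^{b-k}}{(b-k)!}\,L_{z_0}\left(\!\! {\begin{array}{*{20}{c}} {\{z\}_{a+1+k}}\\ {\{1\}_{a+k},2}\\ \end{array}} ;x\!\right)=\sum_{j=0}^{b-k}\binom{a+1+b-j}{a+1+k}W_j .
\]

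\textbf{Inversion step.} Multiply this by $(-1)^k\binom{a+1+k}{k}$ and sum over $k=0,\dots,b$. The coefficient of $W_j$ on the right becomes, with $n:=b-j$,
\[
\sum_{k=0}^{n}(-1)^k\binom{a+1+k}{k}\binom{a+1+n}{a+1+k}.
\]
Using $\binom{a+1+k}{k}\binom{a+1+n}{a+1+k}=\binom{a+1+n}{n}\binom{n}{k}$, this equals $\binom{a+1+n}{n}(1-1)^n=\delta_{n,0}$. Only $W_b$ survives, which is exactly \eqref{equ-aLFR}.

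The main obstacle is purely combinatorial: getting the shuffle multiplicities right when all $z$-letters coincide, and spotting the factorization that collapses the alternating sum. As an alternative check, one can run induction on $b$ using \eqref{diff-RF}. Both sides vanish at $x=0$, and differentiating the right-hand side reproduces $z(x)$ times the $b-1$ instance, since $du/dx=z(x)$.
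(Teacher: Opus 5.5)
Your proof is correct, and so is your reading of the exponent as $b-k$. Carrying out the paper's own computation gives $\bigl(\int_0^x z(u)\,du\bigr)^{b-k}$; the printed proof also writes $\binom{b+1}{k}$ where $\binom{b}{k}$ is meant, and repeats $b+k$ in its last line. The corollary drawn from the theorem uses $\log^{b-k}(2)$, consistent with your correction.

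Your route differs from the paper's. Write $L(w)$ for the iterated integral of a word $w$.

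\textbf{The paper's route.} It derives the formula in one forward pass. It writes the trailing block $(z\,dt)^b$ as $\frac{1}{b!}\bigl(\int_t^x z(u)\,du\bigr)^b$ and substitutes $\int_t^x=\int_0^x-\int_0^t$. It then expands binomially and recognizes $\int_0^x\bigl(\int_0^t z(u)\,du\bigr)^{a+1+k}z_0(t)\,dt$ as $(a+1+k)!\,L(z^{a+1+k}z_0)$. The coefficients $(-1)^k\binom{a+1+k}{k}/(b-k)!$ simply fall out.

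\textbf{Your route.} You go the other way. Shuffling $z^{b-k}$ into $z^{a+1+k}z_0$ gives the triangular system $\frac{u^{b-k}}{(b-k)!}L(z^{a+1+k}z_0)=\sum_{j=0}^{b-k}\binom{a+1+b-j}{a+1+k}W_j$, and your multiplicity count is right. You then invert it via $\binom{a+1+k}{k}\binom{a+1+n}{a+1+k}=\binom{a+1+n}{n}\binom{n}{k}$ and $(1-1)^n=\delta_{n,0}$.

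\textbf{What each buys.} The paper's argument is shorter and discovers the coefficients, whereas yours verifies an answer known in advance. On the other hand, yours is really an identity in the shuffle algebra of words. It therefore transfers unchanged to any shuffle-compatible evaluation, for example shuffle-regularized values at $x=1$ when $\int_0^1 z$ diverges, with $u$ replaced by the regularization variable. It also yields, as a by-product, the inverse expansion of the products $u^{b-k}L(z^{a+1+k}z_0)$ in terms of the $W_j$.

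Your differentiation check is also sound, provided you note one cancellation. The extra $z_0(x)$-terms in the derivative of the right-hand side sum to $z_0(x)\frac{u^{a+b+1}}{(a+1)!\,b!}(1-1)^b=0$ for $b\ge 1$.
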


\begin{proof}
We note that
\begin{align*}
&L_{z_0}\left(\!\! {\begin{array}{*{20}{c}}
   {\{z\}_{a+b+1}}  \\
   {{\{1\}_a},2,\{1\}_{b}}  \\
\end{array}} ;x \! \!\right)=\int_0^x \Big(z(t)dt\Big)^{a+1}z_0(t)dt\Big(z(t)dt\Big)^{b}\\
&=\frac{1}{(a+1)!b!}\int_0^x \left(\!\int_0^tz(u)du\!\right)^{a+1}z_0(t)\left(\!\int_0^x z(u)du-\int_0^t z(u)du\!\right)^{b}dt\\
&=\frac{1}{(a+1)!b!} \sum_{k=0}^{b} (-1)^k \binom{b+1}{k} \left(\!\int_0^x z(u)du\!\right)^{b-k}\int_0^x \left(\!\int_0^t z(u)du\!\right)^{a+1+k}z_0(t)dt\\
&=\sum_{k=0}^{b}\frac{ (-1)^k (a+1+k)!}{(a+1)!b!}  \binom{b+1}{k} \left(\!L_{z_0}\left(\!\! {\begin{array}{*{20}{c}}
   {z}  \\
   {1}  \\
\end{array}} ;x\!\right)\!\right)^{b+k}\int_0^x \big(z(t)dt\big)^{a+1+k}z_0(t)dt.
\end{align*}
Finally, by the definition of the formal multiple polylogarithm function, we have
\begin{align*}
\int_0^x \big(z(t)dt\big)^{a+1+k}z_0(t)dt=L_{z_0}\left(\!\! {\begin{array}{*{20}{c}}
   {\{z\}_{a+1+k}}  \\
   {{\{1\}_{a+k}},2}  \\
\end{array}} ;x\!\right).
\end{align*}
This completes the proof.
\end{proof}

Taking $z(t)=\frac{1}{1+t}$ and $z_0(t)=\frac1{t}$ with $x\to 1$ in \eqref{equ-aLFR} gives (see also~\cite[Thm. 5.1]{Xu2019})

\begin{cor}
For $a,b\in \N_0$,
\begin{align*}
\barz({\{1\}_a},2,\{1\}_b)=\sum_{k=0}^{b} (-1)^k \frac{\log^{b-k}(2)}{(b-k)!}\binom{a+1+k}{k}\barz ({\{1\}_{a+k}},2).
\end{align*}
\end{cor}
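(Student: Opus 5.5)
The plan is to specialize the preceding theorem, i.e. identity \eqref{equ-aLFR}, taking $z(t)=\frac{1}{1+t}$ and $z_0(t)=\frac1t$, and then let $x\to 1$. Everything reduces to identifying each formal object with its alternating counterpart.

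First I will check the identifications.

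\begin{itemize}
\item For the depth-one function: $L_{z_0}\left(\!\! {\begin{array}{*{20}{c}} {z} \\ {1} \\ \end{array}} ;x\!\right)=\int_0^x\frac{dt}{1+t}=\log(1+x)$, which tends to $\log 2$.
\item For a general composition: with these choices, $L_{z_0}\left(\!\! {\begin{array}{*{20}{c}} {\{z\}_r} \\ {\bfk} \\ \end{array}} ;1\!\right)$ is by definition the iterated integral $\int_0^1 \frac{dt}{1+t}\left(\frac{dt}{t}\right)^{k_1-1}\cdots\frac{dt}{1+t}\left(\frac{dt}{t}\right)^{k_r-1}$.
\item That iterated integral equals $\barz(k_1,\dots,k_r)$ by the integral representation stated in the earlier corollary for alternating values.
\end{itemize}

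Applying this to both sides, the left side of \eqref{equ-aLFR} becomes $\barz(\{1\}_a,2,\{1\}_b)$. Each $L_{z_0}\left(\!\! {\begin{array}{*{20}{c}} {\{z\}_{a+1+k}} \\ {\{1\}_{a+k},2} \\ \end{array}} ;1\!\right)$ on the right becomes $\barz(\{1\}_{a+k},2)$.

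Next I need to justify passing to the limit $x\to1$. All integrands are bounded near $t=1$, since $\frac{1}{1+t}$ and $\frac1t$ are regular there. The only singularity is $\frac1t$ at $0$, and it is integrable in these iterated integrals because each starts with a $\frac{dt}{1+t}$ form. So every term is continuous up to $x=1$ (alternatively, the series converge by Abel's theorem), and the limit may be taken termwise in the finite sum.

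The one step I expect to require care is the exponent of the logarithm. The statement of \eqref{equ-aLFR} as printed carries $(L)^{b+k}$, while the corollary needs $\log^{b-k}(2)$. I will redo the expansion in the proof of \eqref{equ-aLFR}:
\[
\Big(\int_0^x z-\int_0^t z\Big)^b=\sum_{k}\binom{b}{k}(-1)^k\Big(\int_0^x z\Big)^{b-k}\Big(\int_0^t z\Big)^k .
\]
The $k$-th term is then $\frac{1}{(a+1)!\,b!}\binom{b}{k}(-1)^k(\int_0^x z)^{b-k}\,(a+1+k)!\,L_{z_0}(\cdots)$. Its coefficient simplifies to
\[
\frac{(-1)^k}{(b-k)!}\binom{a+1+k}{k}\log^{b-k}(2),
\]
which confirms the formula as claimed, with the printed exponent $b+k$ and binomial $\binom{b+1}{k}$ being typos. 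Substituting completes the proof.
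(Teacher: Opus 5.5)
You follow the paper's route exactly: specialize \eqref{equ-aLFR} at $z=\frac1{1+t}$, $z_0=\frac1t$ and let $x\to1$. Your recomputation of the coefficient $\frac{(-1)^k}{(b-k)!}\binom{a+1+k}{k}\log^{b-k}(2)$ correctly repairs the misprints $b+k$ and $\binom{b+1}{k}$. The step that fails is your third identification: the integral representation of $\barz$ you quote is off by a sign. Already in depth one, $\int_0^1\frac{dt}{1+t}=\log 2$, whereas $\barz(1)=\sum_{n\ge1}\frac{(-1)^n}{n}=-\log 2$. Expanding $\frac1{1+t}=\sum_{m\ge0}(-1)^mt^m$ block by block gives in general
\[
\int_0^1 \frac{dt}{1+t}\Big(\frac{dt}{t}\Big)^{k_1-1}\cdots\frac{dt}{1+t}\Big(\frac{dt}{t}\Big)^{k_r-1}=\sum_{0<n_1<\cdots<n_r}\frac{(-1)^{n_r-r}}{n_1^{k_1}\cdots n_r^{k_r}}=(-1)^r\,\barz(k_1,\ldots,k_r).
\]
This is precisely the factor $(-1)^r$ that the paper does include in its representation of $\overline{T}$.

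Carrying this sign through (depth $a+b+1$ on the left, depth $a+1+k$ in the $k$-th term on the right), the specialization actually gives
\[
\barz(\{1\}_a,2,\{1\}_b)=(-1)^b\sum_{k=0}^{b}\frac{\log^{b-k}(2)}{(b-k)!}\binom{a+1+k}{k}\barz(\{1\}_{a+k},2).
\]
So $(-1)^k$ must be replaced by $(-1)^b$. With $\barz$ defined by the series with sign $(-1)^{n_r}$, the displayed identity is false for every $b\ge1$. For $a=0$, $b=1$ one has $\barz(2,1)=\frac{\pi^2}{12}\log 2-\frac14\zeta(3)$, $\barz(2)=-\frac{\pi^2}{12}$ and $\barz(1,2)=\frac18\zeta(3)$. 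Hence $\barz(2,1)=-\log 2\,\barz(2)-2\barz(1,2)$, while the printed right-hand side $\log 2\,\barz(2)-2\barz(1,2)$ equals $-\frac{\pi^2}{12}\log 2-\frac14\zeta(3)$.

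Your argument, like the paper's one-line proof, therefore establishes the statement only if $\barz(k_1,\ldots,k_r)$ is read as the iterated integral itself, i.e.\ as the series with sign $(-1)^{n_r-r}$. Since you were already auditing the identifications, the missing step is a depth-one check of this representation, after which the conclusion must be adjusted as above.
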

Similarly, setting $z(t)=\frac{2}{1+t^2}$ and $z_0(t)=\frac1{t}$ with $x\to 1$ in \eqref{equ-aLFR} gives the result in~\cite[Thm 4.15]{XYZ2022}.

\section{Sum Formulas for Formal Arakawa-Kaneko Zeta Values}

In this section, we establish several sum formulas involving the formal multiple polylogarithm function and the general parametric Arakawa-Kaneko zeta values, again using iterated integrals.

\begin{thm}\label{thm-Relation-FMPLs}
For any composition $\bfk=(k_1,\ldots,k_r)$ and $k,r\in \N$, we have
\begin{align}
&\sum\limits_{k_1+\cdots+k_r=k+r-1\atop k_1,\ldots,k_r\geq 1} L_{z_0}\left(\!\! {\begin{array}{*{20}{c}}
   {\{z\}_{r}}  \\
   {\bfk}  \\
\end{array}} ;x\!\right)=\sum_{j=1}^r (-1)^{j-1} L_{z_0}\left(\!\! {\begin{array}{*{20}{c}}
   {\{z\}_{r-j}}  \\
   {{\{1\}_{r-j}}}  \\
\end{array}} ;x\!\right)L_{z_0}\left(\!\! {\begin{array}{*{20}{c}}
   {\{z\}_{j}}  \\
   {{\{1\}_{j-1}},k}  \\
\end{array}} ;x\!\right),\label{sum-equ-one-special}\\
&L_{z_0}\left(\!\! {\begin{array}{*{20}{c}}
   {\{z\}_{r}}  \\
   {{\{1\}_{r-1}},k}  \\
\end{array}} ;x\!\right)=\sum_{j=1}^r (-1)^{j-1} L_{z_0}\left(\!\! {\begin{array}{*{20}{c}}
   {\{z\}_{r-j}}  \\
   {{\{1\}_{r-j}}}  \\
\end{array}} ;x\!\right) \sum\limits_{k_1+\cdots+k_j=k+j-1\atop k_1,\ldots,k_j\geq 1} L_{z_0}\left(\!\! {\begin{array}{*{20}{c}}
   {\{z\}_{j}}  \\
   {\overrightarrow{\bfk}_j}  \\
\end{array}} ;x\!\right).\label{sum-equ-two-special}
\end{align}
\end{thm}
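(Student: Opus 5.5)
We prove both identities by working at the level of words in the two letters $a:=z(t)\,dt$ and $b:=z_0(t)\,dt$, using the shuffle product of iterated integrals. We write $L(w;x):=\int_0^x w$ for a word $w$.

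\textbf{Step 1: rewrite both sides of \eqref{sum-equ-one-special} as sums of words.} By Definition~\ref{defn-fmplf}, the function $L_{z_0}\left(\!\! {\begin{array}{*{20}{c}} {\{z\}_{r}} \\ {\bfk} \\ \end{array}} ;x\!\right)$ is $L(w;x)$ for the word $w=ab^{k_1-1}ab^{k_2-1}\cdots ab^{k_r-1}$. As $\bfk$ ranges over compositions with $k_1+\cdots+k_r=k+r-1$, these words are exactly all words that begin with $a$ and contain $r$ letters $a$ and $k-1$ letters $b$, each occurring once. So the left-hand side of \eqref{sum-equ-one-special} is $L$ applied to the sum of all such words.

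On the right-hand side, $L_{z_0}\left(\!\! {\begin{array}{*{20}{c}} {\{z\}_{r-j}} \\ {\{1\}_{r-j}} \\ \end{array}} ;x\!\right)=L(a^{r-j};x)$ and $L_{z_0}\left(\!\! {\begin{array}{*{20}{c}} {\{z\}_{j}} \\ {\{1\}_{j-1},k} \\ \end{array}} ;x\!\right)=L(a^jb^{k-1};x)$. By the shuffle product formula for Chen's iterated integrals, their product equals $L\big(a^{r-j}\shuffle a^jb^{k-1};x\big)$.

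\textbf{Step 2: the combinatorial count, which is the key step.} Take any word $W$ with $r$ letters $a$ and $k-1$ letters $b$, and let $m$ be the number of $a$'s preceding the first $b$ in $W$. We must find the coefficient of $W$ in $a^{r-j}\shuffle a^jb^{k-1}$.

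In such a shuffle, the block $b^{k-1}$ comes entirely from the second word. The $j$ letters $a$ of the second word must therefore all occur before the first $b$. Conversely, any choice of $j$ among the $m$ leading $a$'s produces $W$ exactly once. Hence the coefficient is $\binom{m}{j}$.

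Summing with signs gives
\[
\sum_{j=1}^r(-1)^{j-1}\binom{m}{j}=1-(1-1)^m=\begin{cases}1,& m\ge1,\\ 0,& m=0,\end{cases}
\]
since $m\le r$ means the sum over $j\le r$ includes all nonzero binomials. So each word enters the right-hand side with coefficient $1$ exactly when it begins with $a$, and with coefficient $0$ otherwise. This is precisely the left-hand side, which proves \eqref{sum-equ-one-special}. The main care needed is this multiplicity count; everything else is bookkeeping.

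\textbf{Step 3: invert to obtain \eqref{sum-equ-two-special}.} Put $\ell:=L_{z_0}\left(\!\! {\begin{array}{*{20}{c}} {z} \\ {1} \\ \end{array}} ;x\!\right)$. By \eqref{equ-L-TRAN} (shuffle of $a$ with itself), $A_m:=L(a^m;x)=\ell^m/m!$. Let $S_r$ denote the left-hand side of \eqref{sum-equ-one-special} and $P_r:=L(a^rb^{k-1};x)$, with $S_0=P_0=0$. Form the formal series in $T$
\[
S(T):=\sum_{r\ge1}S_rT^r,\qquad P(T):=\sum_{r\ge1}P_rT^r.
\]
Then \eqref{sum-equ-one-special} for all $r\ge1$ is equivalent to
\[
S(T)=-e^{\ell T}P(-T).
\]
Replacing $T$ by $-T$ and multiplying by $-e^{\ell T}$ gives
\[
P(T)=-e^{\ell T}S(-T).
\]
Reading off the coefficient of $T^r$ yields
\[
P_r=\sum_{j=1}^r(-1)^{j-1}A_{r-j}S_j,
\]
which is exactly \eqref{sum-equ-two-special}.
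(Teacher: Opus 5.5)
Your argument is correct, but it takes a different route from the paper.

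\textbf{The paper's route.} The paper never passes to words. It uses $\int_x^y (z\,dt)^k=\frac1{k!}(\int_x^y z)^k$ and the multinomial theorem to collapse the left side of \eqref{sum-equ-one-special} to the single integral $\frac{1}{(k-1)!(r-1)!}\int_0^x(\int_t^x z_0)^{k-1}z(t)(\int_t^x z)^{r-1}dt$. It writes $L_{z_0}(\{z\}_r;\{1\}_{r-1},k;x)$ as the same integral with $(\int_0^t z)^{r-1}$ in place of $(\int_t^x z)^{r-1}$. It then proves \eqref{sum-equ-one-special} and \eqref{sum-equ-two-special} separately, by binomially expanding $\int_t^x z=\int_0^x z-\int_0^t z$ and $\int_0^t z=\int_0^x z-\int_t^x z$ respectively.

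\textbf{Your route.} You work entirely in the shuffle algebra, with your letters $a=z\,dt$, $b=z_0\,dt$. The multiplicity $\binom{m}{j}$ of a word in the shuffle of $a^{r-j}$ with $a^jb^{k-1}$, together with $\sum_{j\ge1}(-1)^{j-1}\binom{m}{j}=1-0^m$, gives \eqref{sum-equ-one-special} without any integral manipulation. The identity $S(T)=-e^{\ell T}P(-T)$ then shows that \eqref{sum-equ-two-special} is just the formal inverse of \eqref{sum-equ-one-special}, not a second computation.

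\textbf{What each buys.} Your approach uses nothing beyond the shuffle product, so it applies verbatim to any two letters, and it makes the equivalence of the two identities transparent. Since $j\ge1$, every word in each shuffle begins with $a$, so no word starting with $z_0\,dt$ ever appears, even termwise. The paper's approach produces the explicit one-variable representations \eqref{sum-equ-FMPLS-iter} and \eqref{nosum-equ-FMPLS-iter}. Its multinomial-collapse mechanism is reused in the proof of Theorem~\ref{thm-add-one}.

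\textbf{One point to make explicit.} State the case $k=1$ separately: there $W=a^r$ contains no $b$, and you should take $m=r$.
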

\begin{proof} Using the elementary identity
\begin{align*}
\int_{x}^y \big(z(t)dt\big)^{k}=\frac{1}{k!} \Big(\int_x^y z(t)dt\Big)^k
\end{align*}
and \eqref{equ-defn-fmplf}, we obtain (with $t_{r+1}:=x$)
\begin{align}\label{equ-iterexperss-FMPLs}
L_{z_0}\left(\!\! {\begin{array}{*{20}{c}}
   {\{z\}_{r}}  \\
   {\bfk}  \\
\end{array}} ;x\!\right)=\frac1{\prod\limits_{j=1}^r (k_j-1)!}\int_{0<t_1<\cdots<t_{r}<x} \left(\!\prod\limits_{j=1}^r z(t_j) \Big(\int_{t_{j}}^{t_{j+1}}z_0(t) dt\Big)^{k_j-1}\!\right)\prod\limits_{j=1}^rdt_j.
\end{align}
Summing over all compositions $\bfk$ with $|\bfk|=k+r-1$ gives
\begin{align}\label{sum-equ-FMPLS-iter}
&\sum\limits_{|\bfk|=k+r-1\atop k_1,\ldots,k_r\geq 1} L_{z_0}\left(\!\! {\begin{array}{*{20}{c}}
   {\{z\}_{r}}  \\
   {\bfk}  \\
\end{array}} ;x\!\right)=\frac1{(k-1)!} \int_{0<t_1<\cdots<t_{r}<x}  \left(\!\prod\limits_{j=1}^r z(t_j)\!\right)\left(\!  \sum_{j=1}^r \int_{t_{j}}^{t_{j+1}} z_0(t)dt\!\right)^{k-1} \prod\limits_{j=1}^rdt_j\nonumber\\
&=\frac1{(k-1)!} \int_{0<t_1<\cdots<t_{r}<x}  \left(\!\prod\limits_{j=1}^r z(t_j)\!\right) \left(\!\int_{t_{1}}^{x} z_0(t)dt\!\right)^{k-1}\prod\limits_{j=1}^rdt_j\nonumber\\
&=\frac1{(k-1)!} \int_{0}^x \left(\!\int_{t_{1}}^{x} z_0(t)dt\!\right)^{k-1} z(t_1)\left(\!\int_{t_1<t_2<\cdots<t_{r}<x} z(t_2)\cdots z(t_r)dt_2\cdots dt_r\!\right) dt_1\nonumber\\
&=\frac1{(k-1)!(r-1)!}   \int_{0}^x \left(\!\int_{t}^{x} z_0(u)du\!\right)^{k-1} z(t) \left(\!\int_{t}^{x} z(u)du\!\right)^{r-1}dt.
\end{align}
On the other hand, taking $\bfk=(\{1\}_{r-1},k)$ in \eqref{equ-iterexperss-FMPLs} yields
\begin{align}\label{nosum-equ-FMPLS-iter}
&L_{z_0}\left(\!\! {\begin{array}{*{20}{c}}
   {\{z\}_{r}}  \\
   {{\{1\}_{r-1}},k}  \\
\end{array}} ;x\!\right)=\frac1{(k-1)!} \int_{0<t_1<\cdots<t_{r}<x}  \left(\!\prod\limits_{j=1}^r z(t_j)\!\right)  \left(\!\int_{t_r}^x z_0(t)dt\!\right)^{k-1}\prod\limits_{j=1}^rdt_j\nonumber\\
&=\frac1{(k-1)!} \int_{0}^x \left(\!\int_{t_{r}}^{x} z_0(t)dt\!\right)^{k-1} z(t_r)\left(\!\int_{0<t_1<\cdots<t_{r-1}<t_t} z(t_1)\cdots z(t_{r-1})dt_1\cdots dt_{r-1}\!\right) dt_r\nonumber\\
&=\frac1{(k-1)!(r-1)!}  \int_{0}^x \left(\!\int_{t}^{x} z_0(u)du\!\right)^{k-1} z(t) \left(\!\int_{0}^{t} z(u)du\!\right)^{r-1}dt.
\end{align}
Combining \eqref{sum-equ-FMPLS-iter} and \eqref{nosum-equ-FMPLS-iter}, we arrive at
\begin{align*}
&\sum\limits_{|\bfk|=k+r-1\atop k_1,\ldots,k_r\geq 1} L_{z_0}\left(\!\! {\begin{array}{*{20}{c}}
   {\{z\}_{r}}  \\
   {\bfk}  \\
\end{array}} ;x\!\right)=\frac{\int_{0}^x \left(\!\int_{t}^{x} z_0(u)du\!\right)^{k-1} z(t) \left(\!\int_{0}^{x} z(u)du-\int_{0}^{t} z(u)du\!\right)^{r-1}dt}{(k-1)!(r-1)!}   \nonumber\\
&=\frac1{(k-1)!(r-1)!} \sum_{j=1}^r (-1)^{j-1} \binom{r-1}{j-1} \left(\!\int_0^x z(t) dt \!\right)^{r-j} \nonumber\\&\quad\quad\quad\quad\quad\quad\quad\quad\quad\quad\times\int_{0}^x \left(\!\int_{t}^{x} z_0(u)du\!\right)^{k-1} z(t) \left(\!\int_{0}^{t} z(u)du\!\right)^{j-1}dt\nonumber\\
&=\sum_{j=1}^r \frac{(-1)^{j-1}}{(r-j)!}\left(\!\int_0^x z(t) dt \!\right)^{r-j} L_{z_0}\left(\!\! {\begin{array}{*{20}{c}}
   {\{z\}_{j}}  \\
   {{\{1\}_{j-1}},k}  \\
\end{array}} ;x\!\right)
\end{align*}
and
\begin{align*}
&L_{z_0}\left(\!\! {\begin{array}{*{20}{c}}
   {\{z\}_{r}}  \\
   {{\{1\}_{r-1}},k}  \\
\end{array}} ;x\!\right)=\frac{  \int_{0}^x \left(\!\int_{t}^{x} z_0(u)du\!\right)^{k-1} z(t) \left(\!\int_{0}^{x} z(u)du-\int_{t}^{x} z(u)du\!\right)^{r-1}dt}{(k-1)!(r-1)!} \nonumber\\
&=\frac1{(k-1)!(r-1)!} \sum_{j=1}^r (-1)^{j-1} \binom{r-1}{j-1} \left(\!\int_0^x z(t) dt \!\right)^{r-j} \nonumber\\&\quad\quad\quad\quad\quad\quad\quad\quad\quad\quad\times\int_{0}^x \left(\!\int_{t}^{x} z_0(u)du\!\right)^{k-1} z(t) \left(\!\int_{t}^{x} z(u)du\!\right)^{j-1}dt\\
&=\sum_{j=1}^r \frac{(-1)^{j-1}}{(r-j)!}\left(\!\int_0^x z(t) dt \!\right)^{r-j} \sum\limits_{k_1+\cdots+k_j=k+j-1\atop k_1,\ldots,k_j\geq 1} L_{z_0}\left(\!\! {\begin{array}{*{20}{c}}
   {\{z\}_{j}}  \\
   {\overrightarrow{\bfk}_j}  \\
\end{array}} ;x\!\right).
\end{align*}
Finally, using
\begin{align*}
\left(\!\int_0^x z(t) dt \!\right)^{r}=r!L_{z_0} \left(\!\! {\begin{array}{*{20}{c}}
   {\{z\}_{r}}  \\
   {{\{1\}_{r}}}  \\
\end{array}} ;x\!\right),
\end{align*}
we obtain the desired identities.
\end{proof}

If we set $z(t)=\frac{1}{1-t}$ and $z_0(t)=\frac1{t}$ in Theorem \ref{thm-Relation-FMPLs}, and $z(t)=\frac{2}{1-t^2}$ and $z_0(t)=\frac1{t}$ with $x\to 1$, we get the following corollaries.

\begin{cor}\emph{(cf.~\cite[Eqs. (2.10)-(2.11)]{LuoSi2023})}
For positive integers $k$ and $r$ with $x\in (0,1)$,
\begin{align}\label{cc6}
\Li_{\{1\}_{r-1},k}(x)=(-1)^{r-1}\sum_{j=1}^r \frac{\log^{r-j}(1-x)}{(r-j)!} \sum_{k_1+\cdots+k_j=k+j-1,\atop k_1,\ldots,k_j\geq 1} \Li_{k_1,k_2,\ldots,k_j}(x)
\end{align}
and
\begin{align}\label{cc7}
\sum_{k_1+\cdots+k_r=k+r-1,\atop k_1,\ldots,k_r\geq 1} \Li_{k_1,k_2,\ldots,k_r}(x)=(-1)^{r-1}\sum_{j=1}^r  \frac{\log^{r-j}(1-x)}{(r-j)!} \Li_{\{1\}_{j-1},k}(x).
\end{align}
\end{cor}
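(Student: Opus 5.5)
The corollary is the specialization $z(t)=\frac{1}{1-t}$, $z_0(t)=\frac1t$ of Theorem \ref{thm-Relation-FMPLs}, with $x\in(0,1)$ fixed. The first step is to identify each ingredient.

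\textbf{The multiple polylogarithm.} By the example following Definition \ref{defn-fmplf}, with this choice of $z$ and $z_0$,
\[
L_{z_0}\left(\!\! {\begin{array}{*{20}{c}} {\{z\}_{j}} \\ {\bfk_j} \\ \end{array}} ;x\!\right)=\Li_{\bfk_j}(x)
\]
for every composition $\bfk_j$.

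\textbf{The depth-one logarithmic factor.} We have
\[
L_{z_0}\left(\!\! {\begin{array}{*{20}{c}} {\{z\}_{m}} \\ {\{1\}_{m}} \\ \end{array}} ;x\!\right)=\frac{1}{m!}\Big(\int_0^x \frac{dt}{1-t}\Big)^m=\frac{(-\log(1-x))^m}{m!}.
\]
This follows from the identity $\int_0^x (z\,dt)^m=\frac1{m!}(\int_0^x z\,dt)^m$ used in the proof of the theorem, or equivalently from the last display of that proof.

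\textbf{Deriving \eqref{cc6}.} Substitute into \eqref{sum-equ-two-special} with $m=r-j$. This gives
\[
\Li_{\{1\}_{r-1},k}(x)=\sum_{j=1}^r (-1)^{j-1}(-1)^{r-j}\frac{\log^{r-j}(1-x)}{(r-j)!}\sum_{k_1+\cdots+k_j=k+j-1} \Li_{k_1,\ldots,k_j}(x).
\]
Since $(-1)^{j-1}(-1)^{r-j}=(-1)^{r-1}$, the sign is independent of $j$ and factors out of the sum, which yields \eqref{cc6}.

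\textbf{Deriving \eqref{cc7}.} The same substitution into \eqref{sum-equ-one-special} produces the identical sign $(-1)^{r-1}$. The inner factor becomes $L_{z_0}\left(\!\! {\begin{array}{*{20}{c}} {\{z\}_{j}} \\ {\{1\}_{j-1},k} \\ \end{array}} ;x\!\right)=\Li_{\{1\}_{j-1},k}(x)$, which gives \eqref{cc7}.

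\textbf{Convergence.} The only point to check is that all integrals converge for $x\in(0,1)$.
\begin{itemize}
\item The function $1/(1-t)$ is bounded on $[0,x]$, so it causes no difficulty.
\item The integrand $1/t$ appears only after at least one $z$-form in every word. Each word begins with $z$ because $k_1\ge1$, so the iterated integral behaves like $t_1$ near $0$ and converges.
\end{itemize}
Hence every manipulation in the proof of Theorem \ref{thm-Relation-FMPLs} (expanding binomials, Fubini) is legitimate. No limit $x\to1$ is needed, so nonadmissible indices such as $k=1$ cause no problem.

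The only step requiring any care is the sign bookkeeping in combining $(-1)^{j-1}$ with $(-\log(1-x))^{r-j}$.
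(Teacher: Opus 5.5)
Your proposal is correct and follows the paper's own route: you specialize Theorem~\ref{thm-Relation-FMPLs} to $z(t)=\frac{1}{1-t}$, $z_0(t)=\frac1t$, identify $L_{z_0}$ with $\Li$ and the all-ones factor with $(-\log(1-x))^{m}/m!$, and absorb $(-1)^{j-1}(-1)^{r-j}=(-1)^{r-1}$. Your explicit sign bookkeeping and convergence check for $x\in(0,1)$ supply details the paper leaves implicit.
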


When $r=2$, either \eqref{cc6} or \eqref{cc7} gives (using the multi-variable multiple polylogarithm notation)
\begin{equation*}
 \log(1-x)\Li_{k}(x)+\Li_{1,k}(x)+\sum_{m+n=k+1,\atop m,n\geq 1} \Li_{m,n}(1,x)=0.
\end{equation*}
Since $\log(1-x)=-\Li_1(x)$, the stuffle relation $\Li_1(x)\Li_{k}(x)=\Li_{1,k}(x,x)+\Li_{k,1}(x,x)+\Li_{k+1}(x)$ implies
\begin{equation*}
\sum_{m+n=k+1,\atop m\geq 1, n\geq 2} \Li_{m,n}(x)=\Li_{k+1}(x)+\big(\Li_{k,1}(x,x)-\Li_{k,1}(1,x)\big)
+\big(\Li_{1,k}(x,x)-\Li_{1,k}(1,x)\big).
\end{equation*}
Letting $x\to 1$ recovers the classical sum formula for double zeta values:
\begin{equation*}
\sum_{m+n=k+1,\atop m\geq 1, n\geq 2} \zeta(m,n)=\zeta(k+1).
\end{equation*}
Based on our computations, we pose the following question:

\begin{qu}
Can \eqref{cc6} or \eqref{cc7} be used to prove the well-known sum formula for multiple zeta values? Namely,
\[\sum_{k_1+\cdots+k_r=k+1,\atop k_1,\ldots,k_{r-1}\geq 1,k_r\geq 2} \zeta(k_1,\ldots,k_r)=\zeta(k+1)\quad (k\in \N).\]
\end{qu}

\begin{cor} For positive integers $k$ and $r$ with $x\in (0,1)$,
\begin{align}\label{pro-formula-KTA1}
\sum\limits_{k_1+\cdots+k_r=k+r-1\atop k_1,\ldots,k_r\geq 1} \A\left(\!k_1,\ldots,k_r;x\!\right)=(-1)^{r-1}\sum\limits_{j=1}^{r} \frac{\log^{r-j}\left(\!\frac{1-x}{1+x}\!\right) }{(r-j)!}\A\left(\!\{1\}_{j-1},k;z\!\right)
\end{align}
and
\begin{align}\label{pro-formula-KTA2}
\A\left(\!\{1\}_{r-1},k;x\!\right)=(-1)^{r-1} \sum_{j=1}^{r} \frac{\log^{r-j}\left(\!\frac{1-x}{1+x}\!\right)}{(r-j)!} \sum_{k_1+\cdots+k_{j}=k+j-1,\atop k_1,\ldots,k_{j}\geq 1} \A\left(\!k_1,\ldots,k_{j};x\!\right).
\end{align}
\end{cor}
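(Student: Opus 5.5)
This corollary follows by specializing Theorem \ref{thm-Relation-FMPLs} to $z(t)=\frac{2}{1-t^2}$ and $z_0(t)=\frac1t$. The identities are stated for $x\in(0,1)$, so no limit $x\to1$ is needed. The \emph{z} in the argument of $\A(\{1\}_{j-1},k;z)$ in \eqref{pro-formula-KTA1} is a typo for $x$.

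First I identify the pieces. By the introduction, with these choices and $z_j=z$ for every $j$, the function $L_{z_0}\left(\!\! {\begin{array}{*{20}{c}} {\{z\}_{r}} \\ {\bfk} \\ \end{array}} ;x\!\right)$ equals $\A(k_1,\ldots,k_r;x)$. This holds for every composition, admissible or not, since only the iterated-integral representation on $[0,x]$ with $x<1$ is used, and that converges. Next,
\[
L_{z_0}\left(\!\! {\begin{array}{*{20}{c}} {\{z\}_{m}} \\ {\{1\}_{m}} \\ \end{array}} ;x\!\right)=\frac{1}{m!}\Big(\int_0^x\frac{2\,du}{1-u^2}\Big)^{m}=\frac{1}{m!}\log^{m}\Big(\frac{1+x}{1-x}\Big)=\frac{(-1)^m}{m!}\log^{m}\Big(\frac{1-x}{1+x}\Big),
\]
using $\int_0^x \frac{2}{1-u^2}\,du=\log\frac{1+x}{1-x}$.

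Then I substitute into \eqref{sum-equ-one-special}. The $j$-th term carries the factor
\[
(-1)^{j-1}\cdot\frac{(-1)^{r-j}}{(r-j)!}\log^{r-j}\Big(\frac{1-x}{1+x}\Big)=(-1)^{r-1}\frac{\log^{r-j}\big(\frac{1-x}{1+x}\big)}{(r-j)!},
\]
multiplied by $\A(\{1\}_{j-1},k;x)$. This gives \eqref{pro-formula-KTA1}. In the same way, \eqref{sum-equ-two-special} gives \eqref{pro-formula-KTA2}, with the inner sum over $k_1+\cdots+k_j=k+j-1$ becoming the corresponding sum of $\A(k_1,\ldots,k_j;x)$.

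The only point needing care is the sign bookkeeping, $(-1)^{j-1}(-1)^{r-j}=(-1)^{r-1}$, together with the orientation of the logarithm. There is no analytic obstacle, because $z$ and $z_0$ are smooth on $(0,x]$ and the singularity of $z_0$ at $0$ is integrable in these iterated integrals, as the paper already assumes.
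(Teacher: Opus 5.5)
Your proposal is correct and is exactly the paper's argument: specialize Theorem~\ref{thm-Relation-FMPLs} to $z(t)=\frac{2}{1-t^2}$, $z_0(t)=\frac1t$. Under this specialization the all-ones factor of depth $m$ becomes $\frac{(-1)^m}{m!}\log^m\frac{1-x}{1+x}$, and the signs combine as $(-1)^{j-1}(-1)^{r-j}=(-1)^{r-1}$. Your side remarks are also right: for $x\in(0,1)$ no limit $x\to1$ is needed (the paper's phrase ``with $x\to1$'' is superfluous here), and the $z$ in $\A(\{1\}_{j-1},k;z)$ is a typo for $x$.
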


\begin{thm}\label{thm-GKTPsi-sum-dual} For any composition $\bfk=(k_1,\ldots,k_r)$ and integer $p\in \N_0$, we have
\begin{align*}
\xi^{(x)}_{z_0}\left(\!\! {\begin{array}{*{20}{c}}
   {\{z\}_r}  \\
   {\{1\}_{r-1},k}  \\
\end{array}} ;{\begin{array}{*{20}{c}}
   {\{z\}_p}  \\
  \{1\}_p \\
\end{array}}\!\!\right)=\sum_{j=1}^r \sum_{|\overrightarrow{\bfk}_j|=k+j-1,\atop k_1,\ldots,k_j\geq 1} (-1)^{j-1} \binom{p+r-j}{p} \xi^{(x)}_{z_0}\left(\!\! {\begin{array}{*{20}{c}}
   {\{z\}_j}  \\
   {\overrightarrow{\bfk}_j}  \\
\end{array}} ;{\begin{array}{*{20}{c}}
   {\{z\}_{p+r-j}}  \\
  \{1\}_{p+r-j} \\
\end{array}}\!\!\right)
\end{align*}
and
\begin{align*}
\sum_{|\bfk|=k+r-1,\atop k_,\ldots,k_r\geq 1} \xi^{(x)}_{z_0}\left(\!\! {\begin{array}{*{20}{c}}
   {\{z\}_r}  \\
   {\overrightarrow{\bfk}}  \\
\end{array}} ;{\begin{array}{*{20}{c}}
   {\{z\}_{p}}  \\
  \{1\}_{p} \\
\end{array}}\!\!\right)=\sum_{j=1}^r (-1)^{j-1} \binom{p+r-j}{p}  \xi^{(x)}_{z_0}\left(\!\! {\begin{array}{*{20}{c}}
   {\{z\}_j}  \\
   {\{1\}_{j-1},k}  \\
\end{array}} ;{\begin{array}{*{20}{c}}
   {\{z\}_{p+r-j}}  \\
  \{1\}_{p+r-j} \\
\end{array}}\!\!\right).
\end{align*}
\end{thm}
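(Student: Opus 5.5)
The plan is to obtain both identities by multiplying the two identities of Theorem \ref{thm-Relation-FMPLs} by $z_0(t)L_{z_0}\binom{\{z\}_p}{\{1\}_p};t$ and integrating from $0$ to $x$. Write $Z(t):=\int_0^t z(u)du$. Then
\[L_{z_0}\left(\!\! {\begin{array}{*{20}{c}} {\{z\}_{m}}  \\ {\{1\}_{m}} \end{array}} ;t\!\right)=\frac{Z(t)^m}{m!}.\]
Applying \eqref{sum-equ-two-special} at the point $t$ rather than $x$ gives
\begin{align*}
L_{z_0}\left(\!\! {\begin{array}{*{20}{c}} {\{z\}_{r}}  \\ {\{1\}_{r-1},k} \end{array}} ;t\!\right)=\sum_{j=1}^r (-1)^{j-1}\frac{Z(t)^{r-j}}{(r-j)!}\sum_{|\overrightarrow{\bfk}_j|=k+j-1} L_{z_0}\left(\!\! {\begin{array}{*{20}{c}} {\{z\}_{j}}  \\ {\overrightarrow{\bfk}_j} \end{array}} ;t\!\right).
\end{align*}

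Multiply both sides by $z_0(t)\,Z(t)^p/p!$ and integrate over $[0,x]$. By Definition \ref{defn-GPAKZVs}, the left side becomes exactly the left side of the first claimed identity. On the right side, the weights combine as
\[\frac{Z(t)^{p}}{p!}\cdot\frac{Z(t)^{r-j}}{(r-j)!}=\binom{p+r-j}{p}\frac{Z(t)^{p+r-j}}{(p+r-j)!}=\binom{p+r-j}{p}L_{z_0}\left(\!\! {\begin{array}{*{20}{c}} {\{z\}_{p+r-j}}  \\ {\{1\}_{p+r-j}} \end{array}} ;t\!\right).\]
Each term therefore integrates to $\binom{p+r-j}{p}\,\xi^{(x)}_{z_0}$ with depth-$j$ argument $\overrightarrow{\bfk}_j$ and tail $\{z\}_{p+r-j}$. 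This is precisely the right side of the first identity. The second identity follows in the same way from \eqref{sum-equ-one-special}: multiply by the same weight and integrate, and the sum over $\bfk$ passes through the integral because it is finite.

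The only point needing care is that Theorem \ref{thm-Relation-FMPLs} holds with $x$ replaced by any $t\in(0,x]$. Its proof uses only iterated-integral manipulations on $[0,t]$, so this is clear. Convergence near $0$ and near $x$ is handled by the same formal conventions as in Definition \ref{defn-GPAKZVs}. I therefore expect no genuine obstacle; the main work is bookkeeping of the binomial coefficient and matching the indices of the $\xi$-notation.
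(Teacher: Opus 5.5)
Your proposal is correct and follows the paper's proof. The paper substitutes the two identities of Theorem~\ref{thm-Relation-FMPLs}, evaluated at the integration variable $t$, into the integral defining $\xi^{(x)}_{z_0}$, and then merges the prefactors by the shuffle relation for all-ones-index functions, which is exactly your identity $\frac{Z(t)^p}{p!}\cdot\frac{Z(t)^{r-j}}{(r-j)!}=\binom{p+r-j}{p}\frac{Z(t)^{p+r-j}}{(p+r-j)!}$.
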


\begin{proof}
By the definition of $\xi^{(x)}_{z_0}(\cdot)$ and using \eqref{sum-equ-one-special}-\eqref{sum-equ-two-special}, we get
\begin{align*}
&\xi^{(x)}_{z_0}\left(\!\! {\begin{array}{*{20}{c}}
   {\{z\}_r}  \\
   {\{1\}_{r-1},k}  \\
\end{array}} ;{\begin{array}{*{20}{c}}
   {\{z\}_p}  \\
  \{1\}_p \\
\end{array}}\!\!\right)=\int_0^x L_{z_0}\left(\!\! {\begin{array}{*{20}{c}}
   {\{z\}_p}  \\
   {{\{1\}_p}}  \\
\end{array}} ;t\!\right)z_0(t) L_{z_0}\left(\!\! {\begin{array}{*{20}{c}}
   {\{z\}_r}  \\
   {{\{1\}_{r-1},k}}  \\
\end{array}} ;t\!\right)dt\\
&=\sum_{j=1}^r \sum_{|\overrightarrow{\bfk}_j|=k+j-1,\atop k_1,\ldots,k_j\geq 1} (-1)^{j-1} \binom{p+r-j}{p}\int_0^x L_{z_0}\left(\!\! {\begin{array}{*{20}{c}}
   {\{z\}_{p+r-j}}  \\
   {{\{1\}_{p+r-j}}}  \\
\end{array}} ;t\!\right)z_0(t) L_{z_0}\left(\!\! {\begin{array}{*{20}{c}}
   {\{z\}_j}  \\
   {\overrightarrow{\bfk}_j}  \\
\end{array}} ;t\!\right)dt
\end{align*}
and
\begin{align*}
&\sum_{|\bfk|=k+r-1,\atop k_,\ldots,k_r\geq 1} \xi^{(x)}_{z_0}\left(\!\! {\begin{array}{*{20}{c}}
   {\{z\}_r}  \\
   {\overrightarrow{\bfk}}  \\
\end{array}} ;{\begin{array}{*{20}{c}}
   {\{z\}_{p}}  \\
  \{1\}_{p} \\
\end{array}}\!\!\right)=\sum_{|\bfk|=k+r-1,\atop k_,\ldots,k_r\geq 1} \int_0^x L_{z_0}\left(\!\! {\begin{array}{*{20}{c}}
   {\{z\}_{p}}  \\
   {{\{1\}_{p}}}  \\
\end{array}} ;t\!\right)z_0(t) L_{z_0}\left(\!\! {\begin{array}{*{20}{c}}
   {\{z\}_r}  \\
   {\bfk}  \\
\end{array}} ;t\!\right)dt \\
&=\sum_{j=1}^r  (-1)^{j-1} \binom{p+r-j}{p}\int_0^x L_{z_0}\left(\!\! {\begin{array}{*{20}{c}}
   {\{z\}_{p+r-j}}  \\
   {{\{1\}_{p+r-j}}}  \\
\end{array}} ;t\!\right)z_0(t) L_{z_0}\left(\!\! {\begin{array}{*{20}{c}}
   {\{z\}_j}  \\
   {{\{1\}_{j-1},k}}  \\
\end{array}} ;t\!\right)dt,
\end{align*}
where we used the shuffle relation
\begin{align*}
L_{z_0}\left(\!\! {\begin{array}{*{20}{c}}
   {\{z\}_{p}}  \\
   {{\{1\}_{p}}}  \\
\end{array}} ;t\!\right) L_{z_0}\left(\!\! {\begin{array}{*{20}{c}}
   {\{z\}_{r}}  \\
   {{\{1\}_{r}}}  \\
\end{array}} ;t\!\right)=\binom{p+r}{p} L_{z_0}\left(\!\! {\begin{array}{*{20}{c}}
   {\{z\}_{p+r}}  \\
   {{\{1\}_{p+r}}}  \\
\end{array}} ;t\!\right).
\end{align*}
This proves the theorem.
\end{proof}

Taking $z(t)=\frac{1}{1-t}$ and $z_0(t)=\frac1{t}$ (or $z(t)=\frac{2}{1-t^2}$ with $x\to 1$) in Theorem \ref{thm-GKTPsi-sum-dual} yields the following corollaries.

\begin{cor}\emph{(cf.~\cite[Thm. 1.1]{LuoSi2023})} \label{thm-AKTPsi-sum-dual}
For any composition $\bfk=(k_1,\ldots,k_r)$ and integer $p\in \N_0$, we have
\begin{align*}
\xi (\{1\}_{r-1},k;p+1)=\sum_{j=1}^r \sum_{k_1+\cdots+k_j=k+j-1,\atop k_1,\ldots,k_j\geq 1} (-1)^{j-1} \binom{p+r-j}{p} \xi (k_1,k_2,\ldots,k_j;p+1+r-j)
\end{align*}
and
\begin{align*}
\sum_{k_1+\cdots+k_r=k+r-1,\atop k_,\ldots,k_r\geq 1} \xi (k_1,k_2,\ldots,k_r;p+1)=\sum_{j=1}^r (-1)^{j-1}\binom{p+r-j}{p} \xi (\{1\}_{j-1},k;p+1+r-j).
\end{align*}
\end{cor}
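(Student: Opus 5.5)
This corollary is the specialization $z(t)=\frac{1}{1-t}$, $z_0(t)=\frac1t$, $x\to 1$ of Theorem \ref{thm-GKTPsi-sum-dual}. The plan is to identify each formal object in that theorem with its classical counterpart and then read off the two identities. Since the theorem is an identity valid for every $x\in(0,1)$, the work lies entirely in the dictionary between the formal and classical objects and in justifying the passage to the limit.

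\emph{The polylogarithm factors.} With this choice of $z$ and $z_0$, the formal multiple polylogarithm $L_{z_0}$ with entries $\{z\}_j$ and indices $\overrightarrow{\bfk}_j$ is exactly $\Li_{k_1,\ldots,k_j}(x)$, as observed after Definition \ref{defn-fmplf}. Moreover $L_{z_0}$ with entries $\{z\}_m$ and indices $\{1\}_m$ equals
\[
\frac{1}{m!}\Big(\int_0^t \frac{du}{1-u}\Big)^m=\frac{(-1)^m\log^m(1-t)}{m!}.
\]

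\emph{The $\xi$-values.} Inserting these into Definition \ref{defn-GPAKZVs} and letting $x\to1$ gives
\[
\xi^{(1)}_{z_0}\left(\!\! {\begin{array}{*{20}{c}}
   {\{z\}_j}  \\
   {\overrightarrow{\bfk}_j}  \\
\end{array}} ;{\begin{array}{*{20}{c}}
   {\{z\}_{m}}  \\
  \{1\}_{m} \\
\end{array}}\!\!\right)=\frac{(-1)^m}{m!}\int_0^1\frac{\log^m(1-t)\Li_{k_1,\ldots,k_j}(t)}{t}\,dt=\xi(k_1,\ldots,k_j;m+1),
\]
by the integral representation of the Arakawa-Kaneko zeta values quoted in the introduction. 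I apply this with $m=p$ on the left-hand sides and $m=p+r-j$ on the right-hand sides. Hence the right-hand $\xi$-values become $\xi(\cdot\,;p+1+r-j)$, matching the statement. The binomial coefficients $\binom{p+r-j}{p}$ and the signs $(-1)^{j-1}$ carry over unchanged. Substituting term by term, both displayed identities of Theorem \ref{thm-GKTPsi-sum-dual} turn into the two identities of the corollary.

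\emph{The main obstacle: the limit $x\to1$.} Each side of the theorem is a finite sum of integrals $\int_0^x$, so it suffices that every integral converges at $t=1$. Near $t=1$, $\Li_{\bfk}(t)$ grows at most like a power of $\log(1-t)$, and it is $O(t)$ at $0$. Hence the integrand $\log^m(1-t)\Li_{\bfk}(t)/t$ is integrable on $[0,1]$ for every composition $\bfk$, including those with $k_j=1$. Dominated (or monotone) convergence then lets me pass to $x\to1$ in each term separately. This also confirms that the $\xi$-values with non-admissible $\bfk$ appearing in the sums are given by the same convergent integrals, which is consistent with the definition of $\xi(\bfk;s)$ for arbitrary compositions.
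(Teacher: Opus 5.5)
Your proposal is correct and follows the paper's route: specialize Theorem \ref{thm-GKTPsi-sum-dual} to $z=\frac{1}{1-t}$, $z_0=\frac1t$, identify the formal objects with $\Li_{\bfk}$ and $\xi(\bfk;m+1)$, and let $x\to1$. You also check explicitly that every integrand is integrable on $[0,1]$, including for non-admissible $\bfk$, so the limit can be taken term by term; the paper leaves this justification implicit.
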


\begin{cor}\label{thm-KTPsi-sum-dual}
For any composition $\bfk=(k_1,\ldots,k_r)$ and integer $p\in \N_0$, we have
\begin{align*}
\psi (\{1\}_{r-1},k;p+1)=\sum_{j=1}^r \sum_{k_1+\cdots+k_j=k+j-1,\atop k_1,\ldots,k_j\geq 1} (-1)^{j-1} \binom{p+r-j}{p} \psi (k_1,k_2,\ldots,k_j;p+1+r-j),
\end{align*}
and
\begin{align*}
\sum_{k_1+\cdots+k_r=k+r-1,\atop k_,\ldots,k_r\geq 1} \psi (k_1,k_2,\ldots,k_r;p+1)=\sum_{j=1}^r (-1)^{j-1}\binom{p+r-j}{p} \psi (\{1\}_{j-1},k;p+1+r-j).
\end{align*}
\end{cor}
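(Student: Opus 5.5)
This corollary is the specialization $z(t)=\frac{2}{1-t^2}$, $z_0(t)=\frac1t$, $x\to1$ of Theorem \ref{thm-GKTPsi-sum-dual}, so the plan is to identify every object in that theorem with a Kaneko--Tsumura object and then pass to the limit.

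\emph{Step 1: the inner functions.} With these choices the definition of the FMPF gives
\[
L_{z_0}\left(\!\! {\begin{array}{*{20}{c}} {\{z\}_j} \\ {\overrightarrow{\bfk}_j} \\ \end{array}} ;t\!\right)=\A(k_1,\ldots,k_j;t),
\]
as noted in the introduction.

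\emph{Step 2: the weight factor.} Since $\int_0^t z(u)\,du=\log\frac{1+t}{1-t}$, the shuffle identity \eqref{equ-L-TRAN} gives
\[
L_{z_0}\left(\!\! {\begin{array}{*{20}{c}} {\{z\}_m} \\ {\{1\}_m} \\ \end{array}} ;t\!\right)=\frac{(-1)^m}{m!}\log^m\!\left(\!\frac{1-t}{1+t}\!\right).
\]

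\emph{Step 3: identification of $\xi$.} Inserting Steps 1 and 2 into Definition \ref{defn-GPAKZVs} and letting $x\to1$, the integral formula for $\psi$ in the introduction gives
\[
\xi^{(1)}_{z_0}\left(\!\! {\begin{array}{*{20}{c}} {\{z\}_j} \\ {\overrightarrow{\bfk}_j} \\ \end{array}} ;{\begin{array}{*{20}{c}} {\{z\}_{m}} \\ \{1\}_{m} \\ \end{array}}\!\!\right)=\psi(k_1,\ldots,k_j;m+1).
\]
Taking $m=p$ on the left-hand sides and $m=p+r-j$ on the right-hand sides of Theorem \ref{thm-GKTPsi-sum-dual}, the two identities become exactly the two displayed formulas, binomial coefficients included. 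For each fixed $x<1$ the theorem holds verbatim, because the proof uses only \eqref{sum-equ-one-special}, \eqref{sum-equ-two-special} and the shuffle relation of the $L_{z_0}(\{z\}_m;\{1\}_m)$ terms under integration against $z_0(t)\,dt$.

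\emph{Step 4: passing to the limit (main obstacle).} Each side of the identity at fixed $x<1$ is a finite sum of integrals $\int_0^x$, so it suffices to show every integrand is integrable on $[0,1)$. Near $t=0$, $\A(\ldots;t)=O(t)$ cancels the factor $1/t$. Near $t=1$, $\A(k_1,\ldots,k_j;t)$ grows at most like a power of $\log(1-t)$, and so does $\log^m\frac{1-t}{1+t}$. Their product is therefore integrable against $dt/t$ near $1$, even for non-admissible indices such as $k_j=1$.

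Hence each integral converges, as $x\to1$, to the corresponding convergent integral representation of the $\psi$-value. Since the sums are finite, the identity survives the limit, completing the proof.
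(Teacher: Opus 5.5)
Your proposal is correct and is essentially the paper's own argument: the paper obtains this corollary by specializing Theorem~\ref{thm-GKTPsi-sum-dual} to $z(t)=\frac{2}{1-t^2}$, $z_0(t)=\frac1t$ and letting $x\to1$, exactly as in your Steps 1--3. Your Step 4 makes explicit the convergence at $t=1$ (at most polylogarithmic growth of $\A(\ldots;t)$ and of $\log^m\frac{1-t}{1+t}$), which the paper leaves implicit.
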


To conclude this section, we present the following more general theorem on symmetric sum formulas.

\begin{thm}\label{thm-add-one}
For any composition $\bfk=(k_1,\ldots,k_r)$ and $k,r,m\in \N$, we have
\begin{align}
&\sum_{k_1+\cdots+k_r=k+r-1,\atop k_1,\ldots,k_r\geq1} \binom{k_r+m-2}{m-1} L_{z_0}\left(\!\! {\begin{array}{*{20}{c}}
   {{z_1}, \ldots,z_{r-1},{z_r}}  \\
   {k_1,\ldots,k_{r-1},k_r+m-1}  \\
\end{array}} ;x\!\right)\nonumber\\
&+\sum_{k_1+\cdots+k_r=m+r-1,\atop k_1,\ldots,k_r\geq1} \binom{k_r+k-2}{k-1} L_{z_0}\left(\!\! {\begin{array}{*{20}{c}}
   {{z_1}, \ldots,z_{r-1},{z_r}}  \\
   {k_1,\ldots,k_{r-1},k_r+k-1}  \\
\end{array}} ;x\!\right)\nonumber\\
&=\sum_{j=1}^{r-1} (-1)^{j-1}L_{z_0}\left(\!\! {\begin{array}{*{20}{c}}
   {{z_j}, \ldots,z_{2},{z_1}}  \\
   {1,\ldots,1,k}  \\
\end{array}} ;x\!\right)L_{z_0}\left(\!\! {\begin{array}{*{20}{c}}
   {{z_{j+1}}, \ldots,z_{r-1},{z_r}}  \\
   {1,\ldots,1,m}  \\
\end{array}} ;x\!\right).
\end{align}
\end{thm}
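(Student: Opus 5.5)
The plan is to write both sides as integrals over regions of $\mathbb{R}^{r}\times\mathbb{R}^{k-1}\times\mathbb{R}^{m-1}$ and compare them with an inclusion–exclusion (telescoping) decomposition, in the spirit of the proof of Theorem \ref{thm-Relation-FMPLs}.

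\textbf{Step 1: rewrite the left-hand side.} Use \eqref{equ-iterexperss-FMPLs} with general $z_1,\ldots,z_r$; the argument there never used $z_j=z$. The first sum becomes
\[
\frac{1}{\prod_{j<r}(k_j-1)!}\cdot\frac{1}{(k_r+m-2)!}\binom{k_r+m-2}{m-1}\Big(\int_{t_r}^x z_0\Big)^{k_r+m-2}\prod_{j<r}\Big(\int_{t_j}^{t_{j+1}}z_0\Big)^{k_j-1}.
\]
The binomial cancels to $\frac{1}{(k_r-1)!\,(m-1)!}$. Summing over $k_1+\cdots+k_r=k+r-1$ with the multinomial theorem, exactly as in \eqref{sum-equ-FMPLS-iter}, gives
\[
\frac{1}{(k-1)!\,(m-1)!}\int_{0<t_1<\cdots<t_r<x}\prod_{j}z_j(t_j)\Big(\int_{t_1}^x z_0\Big)^{k-1}\Big(\int_{t_r}^x z_0\Big)^{m-1}\,dt.
\]
Geometrically, this is the integral over the region where $t_1<\cdots<t_r$, with $k-1$ ordered $z_0$-points in $(t_1,x)$ and $m-1$ ordered $z_0$-points in $(t_r,x)$. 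Call this region $S_1$. The second sum gives the analogous expression with $k$ and $m$ interchanged.

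\textbf{Step 2: rewrite the right-hand side.} For $1\le j\le r-1$, the product
\[
L_{z_0}\left(\!\! {\begin{array}{*{20}{c}} {z_j,\ldots,z_1}\\ {1,\ldots,1,k}\end{array}};x\!\right)L_{z_0}\left(\!\! {\begin{array}{*{20}{c}} {z_{j+1},\ldots,z_r}\\ {1,\ldots,1,m}\end{array}};x\!\right)
\]
is the integral of the same integrand $\prod_j z_j(t_j)\prod z_0(\cdot)$ over the region $R_j$ defined by $t_j<t_{j-1}<\cdots<t_1$ and $t_{j+1}<\cdots<t_r$, with $k-1$ $z_0$-points above $t_1$ and $m-1$ $z_0$-points above $t_r$. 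For $j\ge1$, let $S_j$ be the subregion of configurations of this shape in which $t_j$ is the smallest of all the $t_i$, i.e. $t_j<\cdots<t_1$ and $t_j<t_{j+1}<\cdots<t_r$. Comparing $t_j$ with $t_{j+1}$ gives the disjoint decomposition $R_j=S_j\sqcup S_{j+1}$ up to a null set. Hence
\[
\sum_{j=1}^{r-1}(-1)^{j-1}R_j=S_1+(-1)^{r}S_r .
\]
$S_1$ is exactly the first left-hand term from Step 1. $S_r$ is the region with $t_r<\cdots<t_1$, $k-1$ $z_0$-points above $t_1$ and $m-1$ above $t_r$. Reading it along increasing $t$ should produce the second left-hand sum after the same multinomial summation, with the roles of $k$ and $m$ swapped.

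\textbf{Main obstacle.} The delicate step is this final identification of $S_r$ with the second sum. Reading $S_r$ from the smallest variable upward lists the $z$'s in reversed order $z_r,\ldots,z_1$. The $m-1$ points sit above the \emph{first} $z$-point encountered, and the $k-1$ points sit above the \emph{last}. So the natural outcome is a formula with the $z$'s reversed in the second sum and a sign $(-1)^r$. I would first test $r=2$ by direct computation to pin down the correct sign and ordering conventions. I would then check whether the statement needs the second sum read with $z$'s reversed, or is intended for $z_1=\cdots=z_r$ up to that sign. If necessary, I would adjust the bookkeeping (for instance by re-indexing $j\mapsto r-j$ in the telescoping) so that the boundary terms match the displayed left-hand side exactly.
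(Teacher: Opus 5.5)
Your Steps 1 and 2 follow the paper's proof. Step 1 is exactly \eqref{proof-add-one}. Your telescoping $R_j=S_j\sqcup S_{j+1}$, giving $\sum_{j=1}^{r-1}(-1)^{j-1}R_j=S_1+(-1)^rS_r$, is a self-contained proof of the Chen reversal identity \eqref{proof-add-two}, which the paper only cites. It is applied with $f_1=z_1(t)\big(\int_t^x z_0\big)^{k-1}/(k-1)!$, $f_r=z_r(t)\big(\int_t^x z_0\big)^{m-1}/(m-1)!$ and $f_i=z_i$ otherwise. All of that is correct.

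What goes wrong is your last paragraph: the mismatch you detected is not a bookkeeping artifact that re-indexing $j\mapsto r-j$ could remove. Write $S_r$ in the variables $s_i=t_{r+1-i}$ and apply Step 1 to the string $(z_r,\ldots,z_1)$ with $k$ and $m$ interchanged. Your argument then proves
\begin{align*}
&\sum_{\substack{k_1+\cdots+k_r=k+r-1\\ k_1,\ldots,k_r\geq1}} \binom{k_r+m-2}{m-1} L_{z_0}\left(\!\! {\begin{array}{*{20}{c}}
   {z_1,\ldots,z_{r-1},z_r}  \\
   {k_1,\ldots,k_{r-1},k_r+m-1}  \\
\end{array}} ;x\!\right)\\
&+(-1)^r\sum_{\substack{k_1+\cdots+k_r=m+r-1\\ k_1,\ldots,k_r\geq1}} \binom{k_r+k-2}{k-1} L_{z_0}\left(\!\! {\begin{array}{*{20}{c}}
   {z_r,\ldots,z_2,z_1}  \\
   {k_1,\ldots,k_{r-1},k_r+k-1}  \\
\end{array}} ;x\!\right)\\
&=\sum_{j=1}^{r-1} (-1)^{j-1}L_{z_0}\left(\!\! {\begin{array}{*{20}{c}}
   {z_j,\ldots,z_2,z_1}  \\
   {1,\ldots,1,k}  \\
\end{array}} ;x\!\right)L_{z_0}\left(\!\! {\begin{array}{*{20}{c}}
   {z_{j+1},\ldots,z_{r-1},z_r}  \\
   {1,\ldots,1,m}  \\
\end{array}} ;x\!\right).
\end{align*}
This is also exactly what the paper's own argument via \eqref{proof-add-two} produces. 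The paper's closing ``direct computation'' silently drops the sign $(-1)^r$ and the reversal of the $z$'s.

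The statement as printed is false, so no adjustment can make your boundary terms match it.
\begin{itemize}
\item For $r=1$, the right-hand side is an empty sum. The left-hand side equals $2\binom{k+m-2}{k-1}L_{z_0}\left(\!\! {\begin{array}{*{20}{c}} {z_1}\\ {k+m-1}\\ \end{array}} ;x\!\right)$, which for $k=m=1$ is $2\int_0^x z_1(t)\,dt$.
\item For $r=2$ and $k=m=1$, the statement asserts $2\int_{0<t_1<t_2<x}z_1(t_1)z_2(t_2)\,dt_1dt_2=\int_0^x z_1(t)\,dt\int_0^x z_2(t)\,dt$. With $z_1\equiv1$, $z_2(t)=t$, $x=1$ this reads $2/3=1/2$, which is false. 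The corrected version in this case is just the shuffle product.
\end{itemize}

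So instead of forcing a match, finish by stating the corrected identity above; your Steps 1--2 already prove it in full. When $z_1=\cdots=z_r$, as in the Kaneko--Tsumura $\A$-function application, the reversal is invisible and only the factor $(-1)^r$ is missing. In that case the printed formula holds for even $r$ and fails in general for odd $r$.
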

\begin{proof}
We begin by noting that, directly from Definition~\ref{equ-defn-fmplf}, a straightforward calculation gives
\begin{align}
L_{z_0}\left(\!\! {\begin{array}{*{20}{c}}
   {{z_1},{z_2}, \ldots ,{z_r}}  \\
   {k_1,k_2,\ldots,k_r}  \\
\end{array}} ;x\!\right)&=\frac1{\prod_{j=1}^r (k_j-1)!}\int_{0<t_1<t_2<\cdots<t_r<x} \left(\prod_{j=1}^{r-1} z_j(t_j)\Big(\int_{t_j}^{t_{j+1}} z_0(t)dt \Big)^{k_j-1}\right)\nonumber\\
&\qquad\qquad\qquad\qquad\times z_r(t_r)\Big(\int_{t_r}^{x} z_0(t)dt \Big)^{k_r-1}dt_1\cdots dt_r.
\end{align}
Replacing $k_r$ by $k_r+m-1$ and summing over all compositions satisfying $k_1+\cdots+k_r=k+r-1$, we obtain
\begin{align}\label{proof-add-one}
&\sum_{k_1+\cdots+k_r=k+r-1,\atop k_1,\ldots,k_r\geq1} \binom{k_r+m-2}{m-1} L_{z_0}\left(\!\! {\begin{array}{*{20}{c}}
   {{z_1}, \ldots,z_{r-1},{z_r}}  \\
   {k_1,\ldots,k_{r-1},k_r+m-1}  \\
\end{array}} ;x\!\right)\nonumber\\
&= \frac1{(k-1)!(m-1)!}\int_{0<t_1<t_2<\cdots<t_r<x} z_1(t_1)\cdots z_r(t_r) \Big(\int_{t_r}^x z_0(t)dt\Big)^{m-1} \nonumber\\&\qquad\qquad\qquad\qquad\times \left(\int_{t_1}^{t_2} z_0(t)dt+ \cdots +\int_{t_{r-1}}^{t_r} z_0(t)dt+\int_{t_r}^{x} z_0(t)dt\right)^{k-1}dt_1\cdots dt_r\nonumber\\
&=\int_{0<t_1<t_2<\cdots<t_r<x} \frac{\Big(\int_{t_1}^x z_0(t)dt\Big)^{k-1}}{(k-1)!} z_1(t_1)\cdots z_r(t_r) \frac{\Big(\int_{t_r}^x z_0(t)dt\Big)^{m-1}}{(m-1)!}.
\end{align}
It follows readily from the general theory of Chen's iterated integrals \cite[Eqs. (1.6.1-2)]{KTChen1971} (see also \cite[Lemma 4.1]{XuZhao2020b}) that if \(f_i\) (\(i=1,\dots,r\)) are integrable real-valued functions, then the following identity holds:
\begin{align}\label{proof-add-two}
F_{a,b}(f_1,\ldots,f_r)+(-1)^r F_{a,b}(f_r,\ldots,f_1)
=\sum_{j=1}^{r-1}(-1)^{j-1}F_{a,b}(f_j,\ldots,f_1)F_{a,b}(f_{j+1},\ldots,f_r),
\end{align}
where
\[
F_{a,b}(f_1,\ldots,f_r):=\int_{a<t_1<\cdots<t_{r}<b} f_1(t_1)\cdots f_r(t_r)\,dt_1\cdots dt_r.
\]
In view of the identity
\begin{align}\label{proof-add-three}
\int_{0<t_1<t_2<\cdots<t_r<x}z_1(t_1)\cdots z_r(t_r) \frac{\Big(\int_{t_r}^x z_0(t)dt\Big)^{m-1}}{(m-1)!}=L_{z_0}\left(\!\! {\begin{array}{*{20}{c}}
   {{z_{1}}, \ldots,z_{r-1},{z_r}}  \\
   {1,\ldots,1,m}  \\
\end{array}} ;x\!\right),
\end{align}
substituting this into \eqref{proof-add-one} and combining the result with \eqref{proof-add-two}, a direct computation yields the desired formula.
\end{proof}

By specializing the functions \(z_j\) in Theorem~\ref{thm-add-one}, one immediately obtains numerous relations among Arakawa-Kaneko type zeta functions. In particular, this recovers the symmetric sum formulas for the Kaneko-Tsumura multiple \(A\)-functions established in \cite[Theorem 4.4]{XuZhao2020b}. We leave further specializations to the interested reader.

\medskip
\section{Concluding remarks}
By using iterated integrals, we have provided a unified method for proving many results concerning Arakawa-Kaneko type zeta functions, Kaneko-Tsumura type zeta functions and their special values. This work also highlights the historical interplay between algebraic topology, complex analysis, and number theory, as iterated integrals were originally developed by Chen to study path spaces and fundamental groups, and have since found deep applications in the theory of multiple zeta values. These special values have appeared in various other branches of mathematics, including the study of multiple zeta values and their variants, special functions and polynomials, and even $p$-adic theory (see, e.g., \cite{Young2014,PLXZ2024Oct,Umezawa2018}). The connections with $q$-analogues and multiple zeta values are particularly exciting, as these are among the most active areas of research in number theory (see, e.g., \cite{Brown2012,Zhao2007c}). We expect that further applications of these special values will emerge in the future, for instance in the study of modular forms, quantum invariants, and arithmetic geometry, where iterated integrals and their generalizations are likely to play a prominent role.

{\bf Funding}  Both Ce Xu and Jianqiang Zhao thank Professor Chengming Bai for his invitation to visit the Chern Institute of Mathematics and for the support of the institute's Visiting Scholars Program. Ende Pan is supported by the Zhejiang Provincial Department of Education Research Project (Grant No. Y202559171). Ce Xu is supported by the General Program of the Natural Science Foundation of Anhui Province (Grant No. 2508085MA014). Jianqiang Zhao is supported by the Jacobs Prize from The Bishop's School.

{\bf Availability of data and materials} Not applicable.

{\bf Competing interests} The authors declare that they have no competing interests.

\end{document}